\newtheorem{theirtheorem}{Theorem}
\theoremstyle{plain}
\newtheorem{theorem}{\textbf{Theorem}}[section]
\newtheorem{lemma}[theorem]{\textbf{Lemma}}
\newtheorem*{Freiman-3k-4}{\textbf{Freiman $3k-4$ Theorem}}
\newtheorem*{Freiman-3k-3}{\textbf{Freiman $3k-3$ Theorem}}
\newcommand{\Z}{\mathbb{Z}}
\newcommand{\be}{\begin{equation}}
\newcommand{\ee}{\end{equation}}
\newcommand{\Summ}[1]{\underset{#1}{\sum}}
\newcommand{\C}{\mathbf{C}}
\newcommand{\R}{\mathbb{R}}
\newcommand{\ber}{\begin{eqnarray}}
\newcommand{\eer}{\end{eqnarray}}
\newcommand{\ba}{\begin{align}}
\newcommand{\ea}{\end{align}}
\newcommand{\nn}{\nonumber}
\newcommand{\x}{\textbf{x}}
\newcommand{\e}{\textbf{e}}
\newcommand{\epi}{\mathsf{epi}\,}
\newcommand{\und}{\;\mbox{ and }\;}
\begin{document}

\title{Inverse Additive Problems for Minkowski Sumsets II}
\author{G.~A.~Freiman} 
\address{The Raymond and Beverly Sackler Faculty of Exact Sciences School of
Mathematical Sciences, Tel Aviv University.}
\email{grisha@post.tau.ac.il}
\author{D.~Grynkiewicz}
\address{Institut f\"{u}r Mathematik und Wissenschaftliches Rechnen. Karl-Franzens-Universit\"{a}t, Graz.}
\email{diambri@hotmail.com}
\thanks{Partially supported by the Spanish Research Council project  MTM2008-06620-C03-01 and the
FWF Austrian Scient Fund Project P21576-N18}
\author{O. Serra} 
\address{Departament de Matem\`atica Aplicada IV,
        Universitat Polit\`ecnica de Catalunya.}
\email{oserra@ma4.upc.edu}
\thanks{Supported by the Catalan Research Council    under project .}%
\author{Y.~V.~Stanchescu}
\address{The Open University of Israel, Raanana 43107 and Afeka Academic College, Tel Aviv 69107.}
\email{ionut@openu.ac.il and yonis@afeka.ac.il}
\subjclass[2010]{52A20, 52A40, 26B25}
\keywords{Brunn-Minkowski, convex bodies, sumset, convex functions}

\begin{abstract}
The Brunn-Minkowski Theorem asserts that $\mu_d(A+B)^{1/d}\geq
\mu_d(A)^{1/d}+\mu_d(B)^{1/d}$ for convex bodies $A,\,B\subseteq
\R^d$, where $\mu_d$ denotes the $d$-dimensional Lebesgue measure.
It is well-known that equality holds if and only if $A$ and $B$ are
homothetic, but few characterizations of equality in other related
bounds are known. Let $H$ be a hyperplane. Bonnesen later
strengthened this  bound by showing $$\mu_d(A+B)\geq
\left(M^{1/(d-1)}+N^{1/(d-1)}\right)^{d-1}\left(\frac{\mu_d(A)}{M}+\frac{\mu_d(B)}{N}\right),$$
where $M=\sup\{\mu_{d-1}((\mathbf x+H)\cap A)\mid \mathbf x\in
\R^d\}$ and $N=\sup\{\mu_{d-1}((\mathbf y+H)\cap B)\mid \mathbf y\in
\R^d\}$. Standard compression arguments show that the above bound
also holds when $M=\mu_{d-1}(\pi(A))$ and $N=\mu_{d-1}(\pi(B))$,
where $\pi$ denotes a projection of $\R^d$ onto $H$, which gives an
alternative generalization of the Brunn-Minkowski bound. In this
paper, we characterize the cases of equality in this later bound,
showing that equality holds if and only if $A$ and $B$ are obtained
from a pair of homothetic convex bodies by `stretching' along the
direction of the projection, which is made formal in the paper. When
$d=2$, we  characterize the case of equality in the former bound as
well.
\end{abstract}

\maketitle

\section{Introduction}
Let $\R^d$ denote the $d$-dimensional euclidian space equipped with
the usual  Lebesgue measure. Let $A,\,B\subseteq \R^d$ be {\it
convex bodies}, meaning that $A$ and $B$ are compact, convex subsets
with nonempty interior. Their Minkowski sum, or sumset, is
$$A+B=\{\mathbf a+\mathbf b\mid \mathbf a\in A,\,\mathbf b\in B\}.$$
Whenever the dimension of the convex body $A$ is clear, we will use
$|A|$ to denote its corresponding non-zero Lebesgue measure. For
$\lambda\in \R$, let $\lambda A=\{\lambda \mathbf a\mid \mathbf a\in
A\}$ denote the dilation of $A$ by $\lambda$.  The classical
Brunn-Minkowski  Theorem  gives a lower bound for $|A+B|$ in terms
of $|A|$ and $|B|$, and there are many far reaching generalizations
and applications; see \cite{Gardner-survey} for a fairly
comprehensive survey. Equality is known to hold if and only if $A$
and $B$ are {\it homothetic}, that is, $A=\lambda B+\mathbf v$ for
some $\lambda>0$ and $\mathbf v\in \R^d$ \cite{henstock-macbeath,
Gardner-survey}.

\begin{theirtheorem}[Brunn-Minkowski Theorem]\label{brunn-minkowski-thm}
If $A,\,B\subseteq \R^d$ are convex bodies, then
\be\label{brunn-minkowski-bound}
|A+B|\geq \left(|A|^{1/d}+|B|^{1/d}\right)^{d}.
\ee
\end{theirtheorem}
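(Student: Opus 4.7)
The plan is to establish the inequality via the classical Hadwiger--Ohmann induction, first handling axis-aligned boxes by AM--GM, then extending to finite unions of such boxes via a cutting argument, and finally deducing the statement for convex bodies by inner approximation.

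For the base case, after translating so that both sets are rooted at the origin, write $A = \prod_{i=1}^d [0, a_i]$ and $B = \prod_{i=1}^d [0, b_i]$. Since $A+B = \prod_{i=1}^d [0, a_i + b_i]$, dividing the desired inequality by $|A+B|^{1/d}$ reduces it to
$$\left(\prod_{i=1}^d \tfrac{a_i}{a_i+b_i}\right)^{1/d} + \left(\prod_{i=1}^d \tfrac{b_i}{a_i+b_i}\right)^{1/d} \leq 1.$$
Applying AM--GM to each of the two products separately and summing yields the bound $\tfrac{1}{d}\sum_{i=1}^d \tfrac{a_i}{a_i+b_i} + \tfrac{1}{d}\sum_{i=1}^d \tfrac{b_i}{a_i+b_i} = 1$, as required.

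For the main step, suppose $A$ and $B$ are each finite unions of axis-aligned boxes with pairwise disjoint interiors, and induct on the total number of boxes in $A$ and $B$ combined. If, say, $A$ contains at least two boxes, pick a coordinate hyperplane $H$ strictly separating two of them, splitting $A$ into non-empty pieces $A^+$ and $A^-$, each a union of strictly fewer boxes. Set $\lambda = |A^+|/|A| \in (0,1)$, and translate $B$ along the normal to $H$ so that a parallel hyperplane $H'$ cuts $B$ into $B^+$ and $B^-$ with $|B^+|/|B| = \lambda$. Since $A^+ + B^+$ and $A^- + B^-$ then lie in opposite closed half-spaces bounded by the common hyperplane $H + H'$, their interiors are disjoint, so
$$|A+B| \geq |A^+ + B^+| + |A^- + B^-|.$$
The induction hypothesis applied to each summand, together with $|A^+|^{1/d} = \lambda^{1/d}|A|^{1/d}$, $|B^+|^{1/d} = \lambda^{1/d}|B|^{1/d}$ and the analogous identities with $1-\lambda$, collapses the right-hand side to $\lambda(|A|^{1/d}+|B|^{1/d})^d + (1-\lambda)(|A|^{1/d}+|B|^{1/d})^d = (|A|^{1/d}+|B|^{1/d})^d$.

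Finally, for general convex bodies $A$ and $B$, approximate them from inside by increasing sequences of finite box-unions $A_n \uparrow A$ and $B_n \uparrow B$ constructed from dyadic grids, apply the previous step to each pair, and pass to the limit using the monotone convergence of Lebesgue measure together with the inclusions $A_n + B_n \subseteq A + B$ and $\bigcup_n (A_n + B_n) \supseteq \mathrm{int}(A) + \mathrm{int}(B)$ (the complement of which has measure zero by convexity). The main obstacle in the plan is the Hadwiger--Ohmann cut itself: one must choose $H$ so that both $A^+$ and $A^-$ are non-empty (hence contain strictly fewer boxes than $A$), and choose the translate of $B$ so that $B^+$ and $B^-$ have the precise volume ratio $\lambda:(1-\lambda)$ needed to collapse the induction step. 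Once these mechanics are arranged, the surrounding algebra and the final approximation argument are routine.
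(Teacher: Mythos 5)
The paper does not prove the Brunn--Minkowski Theorem; it is stated as Theorem~A and cited as a classical result (to \cite{henstock-macbeath, Gardner-survey}), so there is no in-paper proof to compare against. Your argument is the standard Hadwiger--Ohmann proof, and it is correct: the AM--GM box case, the cut-and-induct step on finite unions of axis-aligned boxes with disjoint interiors, and the inner approximation by dyadic box unions (using $\mathrm{int}(A)+\mathrm{int}(B)=\mathrm{int}(A+B)$ for convex bodies so that the measures match in the limit) are all sound. One small wording fix: you cannot always \emph{strictly} separate two boxes with disjoint interiors by a coordinate hyperplane (they may share a face), but there is always a coordinate direction $i$ and value $t$ with one box in $\{x_i\le t\}$ and the other in $\{x_i\ge t\}$, which is all the induction needs, since the shared face has measure zero and both $A^+$ and $A^-$ then strictly decrease the box count.
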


For $M,\,N>0$, it can be  shown (as remarked in
\cite{dancs-uhrin,Gardner-survey}) that
\ber\label{bonneson-is-better-brunn-mink}
\left(M^{1/(d-1)}+N^{1/(d-1)}\right)^{d-1}\left(\frac{|A|}{M}+\frac{|B|}{N}\right)\geq
\left(|A|^{1/d}+|B|^{1/d}\right)^{d},\eer
with equality only when
$$M |B|^{\frac{d-1}{d}}=N |A|^{\frac{d-1}{d}}.$$
Consequently, the following result given by Bonnesen in 1929 (see
e.g. \cite{bonnesen-original, bonnesen-book, dancs-uhrin,
Gardner-survey}) improves the Brunn-Minkowski Inequality. Note,
since $A$ and $B$ are compact with nonempty interiors, that the
values $M$ and $N$ in Theorem \ref{Bonnesen-thm} are nonzero and
actually attained for some $\mathbf x\in \R^d$ and $\mathbf
y\in\R^d$. For $d=1$, the coefficients of $|A|$ and $|B|$ in Bonnesen's Bound are to be interpreted as their natural limiting values, i.e., $|A+B|\geq |A|+|B|$.

\begin{theirtheorem}[Bonnesen's Bound I]\label{Bonnesen-thm}
If $A,\,B\subseteq \R^d$ are convex bodies and $H\subseteq \R^d$ is a $(d-1)$-dimensional subspace, then
\be\label{bonneson-bound-hyperslice}
|A+B|\geq \left(M^{1/(d-1)}+N^{1/(d-1)}\right)^{d-1}\left(\frac{|A|}{M}+\frac{|B|}{N}\right),
\ee
where
$M=\sup\{|(\mathbf x+H)\cap A|\mid \mathbf x\in \R^d\}$ and $N=\sup\{|(\mathbf y+H)\cap B|\mid \mathbf y\in \R^d\}$.
\end{theirtheorem}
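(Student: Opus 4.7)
The plan is to slice $A$ and $B$ by hyperplanes parallel to $H$, apply the $(d-1)$-dimensional Brunn-Minkowski inequality slice by slice, and finish with a carefully chosen reparametrization of the resulting one-dimensional integral. To set up the slicing, fix a unit vector $\mathbf n\perp H$ and for $t,s\in\R$ let $A_t=A\cap(H+t\mathbf n)$ and $B_s=B\cap(H+s\mathbf n)$; write $[a_1,a_2]$ and $[b_1,b_2]$ for the intervals on which these slices are nontrivial, and put $f(t)=|A_t|^{1/(d-1)}$, $g(s)=|B_s|^{1/(d-1)}$. By Brunn-Minkowski in $\R^{d-1}$, $f$ and $g$ are concave on their supports, with maxima $k:=M^{1/(d-1)}$ and $\ell:=N^{1/(d-1)}$, attained at some $t_0$ and $s_0$. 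Fubini gives $|A|=\int_{a_1}^{a_2}f^{d-1}\,dt$ and $|B|=\int_{b_1}^{b_2}g^{d-1}\,ds$, and since $(A+B)_{t+s}\supseteq A_t+B_s$, a second application of Brunn-Minkowski in $\R^{d-1}$ yields
$$|(A+B)_r|^{1/(d-1)}\geq \sup_{t+s=r}\bigl(f(t)+g(s)\bigr)=:F(r),$$
so $|A+B|\geq\int_{a_1+b_1}^{a_2+b_2}F(r)^{d-1}\,dr$ by another use of Fubini.

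The key step is to reparametrize this last integral so that the Bonnesen bound appears. Split the intervals at the modes: $[a_1,a_2]=[a_1,t_0]\cup[t_0,a_2]$ and $[b_1,b_2]=[b_1,s_0]\cup[s_0,b_2]$; on each piece $f$ and $g$ are monotonic by concavity. On the pair of increasing branches, introduce $h\in[0,1]$ via $f(t(h))=kh$ and $g(s(h))=\ell h$, and set $r(h)=t(h)+s(h)$; then $r$ is increasing from $a_1+b_1$ to $t_0+s_0$, and $F(r(h))\geq(k+\ell)h$. Changing variable $r=r(h)$ gives
$$\int_{a_1+b_1}^{t_0+s_0}F(r)^{d-1}\,dr\geq(k+\ell)^{d-1}\int_0^1 h^{d-1}\bigl(t'(h)+s'(h)\bigr)\,dh,$$
while the inverse substitutions $t=t(h)$ and $s=s(h)$ yield the identities $\int_0^1 h^{d-1}t'(h)\,dh=k^{-(d-1)}\int_{a_1}^{t_0}f^{d-1}\,dt$ and analogously for $g$. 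Running the same argument on the pair of decreasing branches covers the complementary interval $[t_0+s_0,a_2+b_2]$, and summing the two contributions gives
$$|A+B|\geq (k+\ell)^{d-1}\Bigl(\frac{|A|}{k^{d-1}}+\frac{|B|}{\ell^{d-1}}\Bigr)=\bigl(M^{1/(d-1)}+N^{1/(d-1)}\bigr)^{d-1}\Bigl(\frac{|A|}{M}+\frac{|B|}{N}\Bigr),$$
the desired bound.

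The main technical obstacle is justifying the reparametrization when $f$ or $g$ has a plateau or fails to be $C^1$. I would handle this by approximating $A$ and $B$ by convex bodies whose slice functions are strictly concave and smooth on each monotone branch, and then passing to the limit using continuity of the Lebesgue measure and of the maximal slice functional under Hausdorff convergence of convex bodies. The degenerate case $d=1$ reduces to the trivial $|A+B|\geq|A|+|B|$.
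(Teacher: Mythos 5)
Your proof is correct in outline, and it follows the same high-level strategy that the paper uses for the closely related Bonnesen Bound II inside the proof of Lemma \ref{projection-is-homothetic-lemma}: slice by hyperplanes parallel to $H$, apply $(d-1)$-dimensional Brunn--Minkowski to each pair $A_t+B_s$, and reduce to a one-dimensional integral inequality involving the concave slice functions $f=|A_t|^{1/(d-1)}$ and $g=|B_s|^{1/(d-1)}$. The difference is in how that final one-dimensional inequality
$$\int F^{d-1}\,dr \;\geq\; (k+\ell)^{d-1}\Bigl(k^{-(d-1)}\!\int f^{d-1}\,dt + \ell^{-(d-1)}\!\int g^{d-1}\,ds\Bigr),\qquad F(r)=\sup_{t+s=r}(f(t)+g(s)),$$
is established. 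The paper does not prove Theorem~\ref{Bonnesen-thm} itself but cites it, and in its sketch of the related bound it invokes \cite[Theorem 2.1]{dancs-uhrin} as a black box for exactly this step. You instead supply a direct and elementary proof by splitting each concave profile at its mode, reparametrizing the monotone branches by $f(t(h))=kh$, $g(s(h))=\ell h$, and pushing the inequality $F(t(h)+s(h))\geq (k+\ell)h$ through the change of variables; the identity $\int_0^1 h^{d-1}t'(h)\,dh=k^{-(d-1)}\int f^{d-1}\,dt$ (and its analogue for $g$) then reassembles the bound. This buys you a self-contained argument at the cost of the regularity issues you correctly flag: strict monotonicity of $f,g$ on each branch is needed so that $t(h),s(h)$ are well-defined and absolutely continuous, and the endpoint values $f(a_1),g(b_1)$ need not vanish (one must extend $t,s$ to be constant for $h$ below $f(a_1)/k$, resp.\ $g(b_1)/\ell$, which still respects $F(r(h))\geq(k+\ell)h$). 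Your proposed remedy --- approximating by convex bodies with strictly concave, smooth slice profiles and passing to the Hausdorff limit --- is the standard fix and is sound. In short, your argument is a valid, more elementary replacement for the Dancs--Uhrin citation within the same slicing framework.
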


By standard symmetrization or compression arguments (see e.g.
\cite{convexcalculus, oriol-2d-result} or the proof of Lemma
\ref{projection-is-homothetic-lemma}), Theorem \ref{Bonnesen-thm}
implies the following alternative generalization of the
Brunn-Minkowski Theorem.

\begin{theirtheorem}[Bonnesen's Bound II]\label{Bonnesen-cor}
If $A,\,B\subseteq \R^d$ are convex bodies and $\pi:\R^d\rightarrow \R^{d}$ is a linear transformation with $\dim(\ker \pi)=1$, then
\be\label{bonneson-bound-proj}
|A+B|\geq \left(M^{1/(d-1)}+N^{1/(d-1)}\right)^{d-1}\left(\frac{|A|}{M}+\frac{|B|}{N}\right),
\ee
where $M=|\pi(A)|$ and $N=|\pi(B)|$.
\end{theirtheorem}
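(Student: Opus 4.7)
My plan is to deduce Theorem \ref{Bonnesen-cor} from Theorem \ref{Bonnesen-thm} by a Steiner compression along $\ker\pi$, which converts the projection measure $|\pi(A)|$ into the maximal hyperplane section of a modified body. First, I would normalize: both sides of \eqref{bonneson-bound-proj} scale by the same factor $|\det T|$ under any linear automorphism $T$ of $\R^d$ (replacing $\pi$ by $\pi\circ T^{-1}$), so I may assume that $\ker\pi=\R\e_d$, that $\pi$ is the orthogonal projection onto $H:=\{x_d=0\}$, and set $v:=\e_d$. Writing $\ell_A(\mathbf y)\geq 0$ for the length of the fiber $A\cap(\mathbf y+\R v)$, and defining $\ell_B$ analogously, I would introduce the compressed body
$$A^*:=\bigl\{\mathbf y+tv:\mathbf y\in\pi(A),\;|t|\leq\tfrac12\ell_A(\mathbf y)\bigr\},$$
and $B^*$ analogously.

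Next I would verify four standard properties: (i) $A^*$ and $B^*$ are convex bodies, which follows from the concavity of $\mathbf y\mapsto\ell_A(\mathbf y)$ on $\pi(A)$ (itself a consequence of the convexity of $A$, by taking midpoints of fibers); (ii) by Fubini and the fact that compression moves points only parallel to $v$, we have $|A^*|=|A|$, $|B^*|=|B|$, $\pi(A^*)=\pi(A)$, and $\pi(B^*)=\pi(B)$; (iii) by the $H$-symmetry of $A^*$ together with the concavity of $\ell_A$, the hyperplane section $A^*\cap(tv+H)$ has maximal $(d-1)$-measure at $t=0$, where this measure equals $|\pi(A)|=M$, and analogously for $B^*$ and $N$; (iv) $|A^*+B^*|\leq|A+B|$, verified fiberwise: over each $\mathbf y\in H$, the $v$-fiber of $A^*+B^*$ is a centered interval of length $\sup_{\mathbf y_1+\mathbf y_2=\mathbf y}(\ell_A(\mathbf y_1)+\ell_B(\mathbf y_2))$, while the $v$-fiber of $A+B$ contains, for each such decomposition, an interval of that same length, so Fubini delivers the inequality.

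Finally I would apply Theorem \ref{Bonnesen-thm} to the pair $A^*,B^*$ with hyperplane $H$; by (iii) the Bonnesen constants coincide with $M,N$, and combining with (i), (ii), (iv) produces \eqref{bonneson-bound-proj}. The only non-routine ingredient is (i), the convexity of $A^*$ and $B^*$, which reduces to the one-dimensional Brunn concavity principle for fibers of a convex body; the remaining steps are Fubini and elementary interval arithmetic.
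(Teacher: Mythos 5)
Your proof is correct, and it follows essentially the same route the paper indicates: compress (or symmetrize) each body fiberwise along $\ker\pi$ so that the projection measure becomes a maximal hyperplane section, check that volume is preserved and the sumset does not grow, then apply Theorem~\ref{Bonnesen-thm}. The paper's variant (in the proof of Lemma~\ref{projection-is-homothetic-lemma}) uses a one-sided compression $\C(A)$, pushing each fiber down to rest on $H$, whereas you use the centered Steiner symmetrization $A^*$; the two are interchangeable here, since both preserve volume, keep $|\pi(\cdot)|$ fixed, realize $M$ and $N$ as the maximal hyperplane sections at level zero, and do not increase the sumset, so the distinction is purely cosmetic.
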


In fact, Theorems \ref{brunn-minkowski-thm}, \ref{Bonnesen-thm} and
\ref{Bonnesen-cor} remain true for any subsets $A,\,B\subseteq \R^d$
such that all involved quantities are measurable (see
\cite{henstock-macbeath}). However, the general measurable case is
rather painful from a technical point of view, and it is a rare
textbook that is willing to reproduce the full proof of the case of
inequality in Theorem \ref{brunn-minkowski-thm} for measurable
subsets. To avoid similar issues and present our ideas with greater
clarity, we have focused here only on the case of convex bodies. The
formulation given in Theorem \ref{Bonnesen-cor} actually arises
naturally when attempting to give a discrete version of the
Brunn-Minkowski Theorem valid in $\Z^d$; see \cite{oriol-2d-result,
2-dim-paper}, or \cite{Gardner-gronchi} for a discrete version of a
somewhat different form.

\medskip

We will use the following notation throughout the paper.
Let $\pi:\R^d\rightarrow \R^d$ be a linear transformation with
$\dim(\ker \pi)=1$. Then $\pi(\R^d)=K$ for some $(d-1)$-dimensional
subspace $K$. Let $\e_0,\e_1,\ldots,\e_{d-1}\in \R^{d}$ be an
orthonormal basis for $\R^d$ such that  $\e_1,\ldots,\e_{d-1}$ span
$K$. Since $\dim(\ker \pi)=1$, we have  $\ker \pi=\R\mathbf u$ for
any nonzero $\mathbf u\in \ker \pi$.
Choose $\mathbf u\in \ker \pi$
such that the elements $\mathbf u,\e_1,\ldots,\e_{d-1}$ form a basis for $\R^d$
with the linear isomorphism $\varphi:\R^d\rightarrow \R^d$ defined
by $\varphi(\e_i)=\e_i$ for $i\geq 1$ and $\varphi(\e_0)=\mathbf u$
being volume preserving.

Then an element $\mathbf x=x_0\mathbf
u+x_1\e_1+\ldots+x_{d-1}\e_{d-1}\in \R^d$ may be written as $\mathbf
x=(x_0,x_1,\ldots,x_{d-1})$ and a convex body $A\subseteq \R^d$ can
be described as
\begin{equation}\label{convex-graph-characterization}
A=\{(y,\mathbf x) \in \R\times \R^{d-1}\mid \mathbf x\in \pi(A),\; u_A(\mathbf x)\le y \le v_A(\mathbf x)\}
\end{equation}
with  $u_A:\pi(A)\subseteq \R^{d-1}\rightarrow \R$ a convex function  and $v_A:\pi(A)\subseteq\R^{d-1}\rightarrow \R$ a concave function. We say that $A'$ is
a  {\it stretching} of $A$ (with respect to $\pi$) of amount
$h \ge 0$ if
$$
A'=\{(y,\mathbf x)\in \R\times \R^{d-1}\mid \mathbf x\in \pi(A),\; u_A(\mathbf x)\le y \le v_A(\mathbf x)+h\}.
$$
When $\mathbf u=\e_0$, which we will be able to assume as a
normalization condition as explained at the beginning of Section
\ref{main-sec}, we speak of a \emph{vertical stretching}.

\medskip

The goal of this paper is to characterize the pairs $A$ and $B$ for which equality holds in Theorem \ref{Bonnesen-cor}.

\begin{theorem}\label{Dual-thm} Let $A,\,B\subseteq \R^d$ be convex
bodies and let $\pi:\R^d\rightarrow \R^{d}$ be a linear
transformation with $\dim(\ker \pi)=1$. Then
\be\label{dual-hypothesis}|A+B|=
\left(M^{1/(d-1)}+N^{1/(d-1)}\right)^{d-1}\left(\frac{|A|}{M}+\frac{|B|}{N}\right),\ee
where $M=|\pi(A)|$ and $N=|\pi(B)|$, if and only if there are
homothetic convex bodies $A',\,B'\subseteq \R^d$ such that $A$ is a
stretching of $A'$ and $B$ is a stretching of $B'$, both with
respect to $\pi$.
\end{theorem}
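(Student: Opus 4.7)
The plan is to normalize $\mathbf u = \mathbf e_0$ so that the stretching is vertical, and use the graph characterization~\eqref{convex-graph-characterization} to represent $A, B$ via convex lower-envelope functions $u_A, u_B$ and concave upper-envelope functions $v_A, v_B$, with nonnegative concave slice-length functions $f_A := v_A - u_A$ and $f_B := v_B - u_B$ on $\pi(A), \pi(B)$. Sufficiency is a direct computation: if $A = A' + [\mathbf 0, h_A \mathbf e_0]$ and $B = B' + [\mathbf 0, h_B \mathbf e_0]$ with $B' = \lambda A' + \mathbf v$ homothetic, then $A + B = (A' + B') + [\mathbf 0, (h_A + h_B)\mathbf e_0]$, so $|A + B| = |A' + B'| + (h_A + h_B)|\pi(A)+\pi(B)|$. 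The equality case of Brunn--Minkowski in dimension $d$ gives $|A' + B'| = (|A'|^{1/d} + |B'|^{1/d})^d$, while the equality case of Brunn--Minkowski in dimension $d - 1$, applied to the $(d-1)$-homothetic projections $\pi(A'), \pi(B')$, gives $|\pi(A)+\pi(B)| = (M^{1/(d-1)} + N^{1/(d-1)})^{d-1}$; combined with $|A'| = |A| - h_A M$ and $|B'| = |B| - h_B N$, elementary algebra recovers the right-hand side of~\eqref{dual-hypothesis}.

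For necessity, I trace equality through a level-set proof of Theorem~\ref{Bonnesen-cor}. For $\alpha \ge 0$ let $A_\alpha := \{\mathbf x \in \pi(A) : f_A(\mathbf x) \ge \alpha\}$ (convex in $\R^{d-1}$), and define $B_\beta$ analogously; parametrize by $s \in (0, 1]$ via quantiles $\alpha_A(s), \beta_B(s)$ with $|A_{\alpha_A(s)}| = sM$ and $|B_{\beta_B(s)}| = sN$. The inclusion $A_\alpha + B_\beta \subseteq (A+B)_{\alpha+\beta}$ (from the pointwise bound $f_{A+B}(\mathbf x+\mathbf y) \ge f_A(\mathbf x) + f_B(\mathbf y)$) together with Brunn--Minkowski in $\R^{d-1}$ yields $|A_\alpha + B_\beta| \ge s\,(M^{1/(d-1)} + N^{1/(d-1)})^{d-1}$. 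Applying Cavalieri to $|A+B| = \int |(A+B)_t|\,dt$, changing variable via $t(s) = \alpha_A(s) + \beta_B(s)$, and using the layer-cake identities $\int_0^1 \alpha_A(s)\,ds = |A|/M$ and $\int_0^1 \beta_B(s)\,ds = |B|/N$ recovers Bonnesen's bound exactly. Equality therefore forces, for a.e.\ $s \in (0, 1)$: (I) $A_{\alpha_A(s)}$ and $B_{\beta_B(s)}$ are $(d-1)$-homothetic with the constant ratio $\mu = (N/M)^{1/(d-1)}$, so $B_{\beta_B(s)} = \mu A_{\alpha_A(s)} + \mathbf w(s)$; (II) $(A+B)_{t(s)} = A_{\alpha_A(s)} + B_{\beta_B(s)} = (1+\mu) A_{\alpha_A(s)} + \mathbf w(s)$; and (III) on a.e.\ fibre $\mathbf x + \mathbf y = \mathbf z$, both $\sup(v_A + v_B)$ and $\inf(u_A + u_B)$ are attained at a common pair, which forces $f_{A+B}(\mathbf z) = \sup_{\mathbf x+\mathbf y=\mathbf z}(f_A(\mathbf x)+f_B(\mathbf y))$.

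To finish, set $h_A := \min_{\pi(A)} f_A \ge 0$ and $h_B := \min_{\pi(B)} f_B \ge 0$, and define un-stretched bodies $A', B'$ by $u_{A'} = u_A$, $v_{A'} = v_A - h_A$ and analogously for $B'$; then $A, B$ are stretchings of $A', B'$ by amounts $h_A, h_B$. Reindexing via $\alpha' = \alpha - h_A$ and $\beta' = \mu\alpha' = \beta - h_B$, conditions (I) and (II) become $B'_{\mu\alpha'} = \mu A'_{\alpha'} + \mathbf w(\alpha')$ and $(A'+B')_{(1+\mu)\alpha'} = (1+\mu) A'_{\alpha'} + \mathbf w(\alpha')$, and (III) relates the lower envelopes $u_{A'}, u_{B'}$. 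The main obstacle is to show that $\mathbf w(\alpha')$ is in fact constant in $\alpha'$ (after possibly translating $B$), from which $B'_{\beta'} = \mu A'_{\beta'/\mu} + \mathbf w_0$ for all $\beta'$, equivalently $f_{B'}(\mathbf y) = \mu f_{A'}((\mathbf y-\mathbf w_0)/\mu)$; condition (III) then pins down $u_{B'}(\mathbf y) = \mu u_{A'}((\mathbf y - \mathbf w_0)/\mu) + c_0$ and hence $B' = \mu A' + (c_0, \mathbf w_0)$, completing the characterization. I expect the constancy of $\mathbf w$ to be the hardest point: it should follow from the simultaneous validity of (I), (II), and the concavity of each of $f_{A'}, f_{B'}, f_{A'+B'}$, which produces three different families of nested convex sets (the $A'$-levels, the $B'$-levels, and the $(A'+B')$-levels) sharing the common shift $\mathbf w(\alpha')$ with the three distinct scale factors $1, \mu, 1+\mu$; the rigidity of these three simultaneous nested systems under the respective concavity constraints should rule out any $\alpha'$-dependence of $\mathbf w$ beyond the single translational degree of freedom absorbed by shifting $B$.
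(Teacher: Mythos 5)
Your sufficiency direction is correct and essentially the same computation as the paper's (via Lemma \ref{lemma-stretching-perseves-bonnbound}). For necessity, you take a genuinely different route: a level-set / Cavalieri decomposition of the thickness function $f_A=v_A-u_A$, rather than the paper's derivative-based argument (Lemmas \ref{lemma-basecase-d=2}--\ref{lemma-function-case} combined with a dyadic grid approximation and the split of each body into $A^+=A\cap(\R_{\geq 0}\times\R^{d-1})$ and $A^-$). Your outline of the Cavalieri chain and the forced equality conditions (I)--(III) is a plausible skeleton, and the change of variables reproducing $\int_0^1\alpha_A(s)\,ds=|A|/M$ is correct.

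However, there are two genuine gaps, both at the heart of the argument. First, you flag the constancy of the translation $\mathbf w(\alpha')$ as "the hardest point" and offer only a heuristic rigidity expectation. This cannot be waved away: condition (I) alone (level sets homothetic with the right ratio) is demonstrably \emph{insufficient} -- already in $d=2$ one can take two tent functions on $[0,1]$ with the same level-set lengths but apexes at $1/2$ and $1/4$, for which (I) holds with $\mathbf w(\alpha)=-\alpha/4$ non-constant and Bonnesen is strict. So your claim that the rigidity follows from concavity plus the three nested systems has to invoke (II) in an essential quantitative way, and you have not extracted the needed consequence of (II) (roughly, that the quantile pair must simultaneously minimize $u_A+u_B$ and maximize $v_A+v_B$ over each fibre, which, after differentiating the homothety in $s$, yields $\mathbf w'=0$ -- a small computation in $d=2$ but unaddressed for general $d$, where the level sets are $(d-1)$-dimensional convex bodies and the corresponding rigidity needs a support-function argument). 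Second, even granting constancy of $\mathbf w$, you have only controlled the thickness functions $f_A,f_B$; the theorem requires $A',B'$ themselves to be homothetic, which needs the lower envelopes $u_A,u_B$ (equivalently $v_A,v_B$) to satisfy the \emph{same} homothety. Your condition (III) is asserted to "pin down" $u_{B'}$ but no argument is given; this is precisely the content the paper earns via Lemma \ref{lemma-function-case} applied separately to $A^++B^+$ and $(-A^-)+(-B^-)$, and it is the single most technical step in the whole proof. As written, the proposal is an outline of a potentially viable alternative proof, not a proof.
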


When $d=2$, we also give a simple argument to derive the
characterization of equality in Theorem \ref{Bonnesen-thm} from the
characterization of equality in Theorem \ref{Bonnesen-cor}.

\begin{theorem}\label{thm-2-dim-nondual} Let $H\subseteq \R^2$ be a one dimensional subspace and let $A,\,B\subseteq \R^2$
be convex bodies translated so that
\begin{align}
\nn&M:=|H\cap A|=\sup\{|(\mathbf x+H)\cap A|\mid \mathbf x\in \R^2\}\und\\\nn &N:=|
H\cap B|=\sup\{|(\mathbf x+H)\cap B|\mid \mathbf x\in \R^2\}.
\end{align}
Then
\be\label{dim2-hyp}
|A+B|= (M+N)\left(\frac{|A|}{M}+\frac{|B|}{N}\right)
\ee
if and only if there exists a linear transformation
$\pi:\R^2\rightarrow \R^2$  and  homothetic convex bodies
$A',\,B'\subseteq \R^2$ such that $\pi(\R^2)=H$,
$$\pi(A)=\pi(A')=H\cap A=H\cap A'\und\pi(B)=\pi(B')=H\cap B=H\cap
B'$$ with $A$ a stretching of $A'$ and $B$ a stretching of $B'$,
both with respect to $\pi$.
\end{theorem}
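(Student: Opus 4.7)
The `if' direction is immediate from Theorem \ref{Dual-thm}: the stated $\pi,A',B'$ satisfy $|\pi(A)|=|H\cap A|=M$ and $|\pi(B)|=N$, so Theorem \ref{Dual-thm} delivers \eqref{dim2-hyp} from the stretching structure.

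For the `only if' direction I normalize so that $H$ is the $x$-axis and, after translation, $H\cap A=[0,M]\times\{0\}$. Parametrize the candidate projections by $\pi_s:\R^2\to\R^2$, $\pi_s(x,y)=(x-sy,0)$, whose kernel is $\R(s,1)$ and whose image is $H$. The plan is to locate an $s_0\in\R$ with $\pi_{s_0}(A)=H\cap A$ and $\pi_{s_0}(B)=H\cap B$, and then invoke Theorem \ref{Dual-thm} with $\pi=\pi_{s_0}$. Set $\alpha(s)=|\pi_s(A)|$, $\beta(s)=|\pi_s(B)|$, $S_A=\{s:\alpha(s)=M\}$ and $S_B=\{s:\beta(s)=N\}$. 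Since $\pi_s$ fixes $H$ pointwise one has $\alpha(s)\ge M$ and $\beta(s)\ge N$ for every $s$, and a short calculation with the one-sided derivatives at $y=0$ of the left envelope $a_-$ (convex) and the right envelope $a_+$ (concave) of $A$, using that the slice length $a_+-a_-$ attains its maximum at $y=0$, shows $S_A$ is a nonempty closed interval; likewise for $S_B$.

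The key step is to show $S_A\cap S_B\ne\emptyset$. Fix any $s\in S_A$; Theorem \ref{Bonnesen-cor} applied with $\pi=\pi_s$ gives
\[|A+B|\ge (M+\beta(s))\left(\frac{|A|}{M}+\frac{|B|}{\beta(s)}\right)=:f_1(\beta(s)),\]
while \eqref{dim2-hyp} reads $|A+B|=f_1(N)$, whence $f_1(\beta(s))\le f_1(N)$ together with $\beta(s)\ge N$. The function $f_1$ is strictly convex on $(0,\infty)$ with unique minimum at $\beta^*:=M\sqrt{|B|/|A|}$. If $N\ge\beta^*$ then $f_1$ is strictly increasing on $[N,\infty)$, and the two inequalities force $\beta(s)=N$, hence $s\in S_B$, giving $S_A\subseteq S_B$. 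If instead $N<\beta^*$, exchanging the roles of $A$ and $B$ (noting that \eqref{dim2-hyp} is symmetric in $(A,B)$) reduces to the previous case and yields $S_B\subseteq S_A$. In either case $S_A\cap S_B\ne\emptyset$.

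Picking $s_0\in S_A\cap S_B$ and setting $\pi=\pi_{s_0}$, the equality $|\pi(A)|=M=|H\cap A|$ together with the inclusion $H\cap A\subseteq\pi(A)$ of one-dimensional intervals forces $\pi(A)=H\cap A$, and likewise $\pi(B)=H\cap B$. Theorem \ref{Dual-thm} applied with this $\pi$ then produces homothetic convex bodies $A'$, $B'$ of which $A$, $B$ are $\pi$-stretchings; the stretching relation gives $\pi(A')=\pi(A)=H\cap A$ and analogously for $B'$. Finally, the equality $\pi(A)=H\cap A$ means that the top envelope of $A$ is nonnegative throughout $\pi(A)$, so choosing the stretch amounts $h_A,h_B$ within the Dual-thm's homothety class small enough that the unstretched tops of $A'$ and $B'$ still lie at or above $H$ secures $H\cap A'=H\cap A$ and $H\cap B'=H\cap B$, completing all the conditions of Theorem \ref{thm-2-dim-nondual}. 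The main obstacles will be the case analysis for $S_A\cap S_B\ne\emptyset$ (particularly the clean symmetric reduction in the case $N<\beta^*$) and, at the end, verifying that the Dual-thm's freedom in selecting the stretching pair accommodates the slice-coincidence $H\cap A=H\cap A'$.
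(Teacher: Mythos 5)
Your proof takes essentially the same route as the paper.  The ``if'' direction is the same.  For the ``only if'' direction, your sets $S_A,\,S_B$ and the one-sided derivative computation at $y=0$ reproduce the content of the paper's Lemma~\ref{2-dim-lineup-lemma} (that at least one projection $\pi$ has $\pi(A)=H\cap A$), and your function $f_1(y)=(M+y)\bigl(\frac{|A|}{M}+\frac{|B|}{y}\bigr)$ is exactly the paper's $h(M,y)$; your strict-convexity analysis of $f_1$ around its minimum $\beta^*=M\sqrt{|B|/|A|}$ is the paper's derivative computation $h'_x(y)=\frac{|A|}{x}-\frac{|B|x}{y^2}$.  Your symmetric case analysis ($N\ge\beta^*$ vs.~$N<\beta^*$, swapping $A\leftrightarrow B$) is just a more comprehensive way of stating the paper's WLOG normalization $\frac{|B|}{N^2}\le\frac{|A|}{M^2}$.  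After that both proofs hand everything to Theorem~\ref{Dual-thm}.

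One comment on the final paragraph.  You correctly notice that invoking Theorem~\ref{Dual-thm} only gives the stretching pair $(A',B')$ with $\pi(A')=\pi(A)$ and $\pi(B')=\pi(B)$; the slice identities $H\cap A'=H\cap A$ and $H\cap B'=H\cap B$ do not come for free.  (The paper itself passes over this point: it simply writes ``applying Theorem~\ref{Dual-thm} with $\pi$ completes the proof.'')  However, your proposed fix --- ``choose the stretch amounts $h_A,h_B$ small enough that the unstretched tops of $A'$ and $B'$ lie at or above $H$'' --- does not go through as stated.  Within the Theorem~\ref{Dual-thm} homothety class the stretch amounts are not free parameters: one has $u_{A'}=u_A$, $u_{B'}=u_B$, and the homothety of $A'$ and $B'$ forces a rigid affine relation $h_A=\lambda h_B + D$ with $D$ determined by $A$ and $B$.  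One cannot simply decrease $h_A$ without possibly driving $h_B$ negative, and there are admissible $(A,B)$ (take two coaxial stadia of the same radius but very different rectangular heights, so that $\pi$ is forced to be the vertical projection) for which no choice of $h_A$ in the allowed range makes $v_{A'}\ge 0$ on all of $\pi(A)$, with either orientation of $\ker\pi$.  So the slice-coincidence step needs a genuine additional argument (or a mild reformulation of the stated conclusion), and ``shrink the stretches'' is not it.  Aside from this last paragraph, the argument matches the paper's.
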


\section{Equality in the Projection Bonnesen Bound}\label{main-sec}

The goal of this section is to prove Theorem \ref{Dual-thm}. The
case $d=1$ is trivial, so we henceforth assume $d\geq 2$. We use the
notation introduced before Theorem \ref{Dual-thm}. Then, letting
$\pi':\R^d\rightarrow \R^d$ denote the projection given by
$\pi'(y_0\e_0+y_1\e_1+\ldots+y_{d-1}\e_{d-1})=
y_1\e_1+\ldots+y_{d-1}\e_{d-1}$, we have
$|\pi(A)|=|\pi'(\varphi^{-1}(A))|$ and
$|\pi(B)|=|\pi'(\varphi^{-1}(B))|$. Since $\varphi$ is volume
preserving, and hence $\varphi^{-1}$ as well, we see that it
suffices to prove the theorem when $\mathbf u=\e_0$, as we can then
apply this case of the theorem to $\varphi^{-1}(A)+\varphi^{-1}(B)$,
derive the structure of $\varphi^{-1}(A)$ and $\varphi^{-1}(B)$, and
then find the structure of $A$ and $B$ by applying the linear
isomorphism $\varphi$. Thus we assume $\mathbf u=\e_0$ throughout
this section. In particular, $\pi:\R^d\rightarrow \R^d$ denotes the
projection given by
$$\pi(x_0,x_1,\ldots,x_{d-1})=(0,x_1,\ldots,x_{d-1}).$$

The proof requires a solid grasp of the fundamental metric
properties and differential calculus of convex functions; see, e.g.,
\cite{convexcalculus, convexity, metric-geometry}. We summarize the
needed points below for the convenience of the reader.

\subsection{Convex Calculus Basics}
If $S\subseteq \R^{d-1}$ is a convex set and $f:S\rightarrow
\R_{\leq 0}$, then we let $$\epi^* f=\{(y,\mathbf{x})\mid y\in
\R,\,\mathbf x\in S,\,f(\mathbf x)\leq  y\leq 0 \}\subseteq \R^d$$
denote the (truncated) epigraph of $f$ in $\R^{d}$. Following  the
standard convention in the theory of convex analysis, the above
definition of  epigraph is written \emph{upside down}. This is
done, in part, because under this convention, the function $f$ is
convex precisely when $\epi^* f$ is a convex set.

Recall that a function $f:S\rightarrow \R_{\geq 0}$ is
\emph{concave} if and only if $-f$ is convex, which is equivalent to
$$
-\epi^*(-f)=\{(y,\mathbf{x})\mid y\in \R,\,\mathbf x\in S,\,0\leq
y\leq f(\mathbf x) \}\subseteq \R^d
$$
being convex. For $\mathbf z\in \R^{d-1}$, we let $$f'(\mathbf
x;\mathbf z):=\underset{\lambda>0}{\lim_{\lambda\rightarrow
0}}\frac{f(\mathbf x+\lambda \mathbf z)-f(\mathbf x)}{\lambda}$$
denote the {\it onesided directional derivative} of $f$ at $\mathbf
x$ with respect to the direction $\mathbf z$, and then $$-f'(\mathbf
x;-\mathbf z)=\underset{\lambda<0}{\lim_{\lambda\rightarrow
0}}\frac{f(\mathbf x+\lambda \mathbf z)-f(\mathbf x)}{\lambda}.$$
When $d=2$, there are only two directions, and $f_+(x):=f'(x;1)$ is
called the \emph{right derivative} and $f_-(x):=-f'(x;-1)$  the
\emph{left derivative}. It is a basic property of convex functions
that $$\frac{f(\mathbf x+\lambda)-f(\mathbf x)}{\lambda},$$ for
$\lambda >0$, is a non-decreasing function of $\lambda$ (and thus a
non-increasing function of $\lambda>0$ for concave functions $f$),
so that $f'(\mathbf x;\mathbf z)$ always exists (apart from points
on the boundary of $S$ where $f(\mathbf x+\lambda \mathbf z)$ is
undefined for all $\lambda>0$). Moreover, $-f'(\mathbf x;-\mathbf
z)\leq f'(\mathbf x;\mathbf z)$ with equality occurring precisely
when $f$ is differentiable at $\mathbf x$ in the direction $\mathbf
z$, in which case the usual derivative is equal to $-f'(\mathbf
x;-\mathbf z)=f'(\mathbf x;\mathbf z)$.

At a differentiable point $\mathbf x\in \mathsf{int}\; S\subseteq
\R^{d-1}$, where $\mathsf{int}\; S$ denotes the interior of $S$,
there is a unique tangent hyperplane passing through $(f(\mathbf
x),\mathbf x)\in \R\times \R^{d-1}$, which gives rise to the usual
gradient $\nabla f(\mathbf x)\in \R^{d-1}$, whose $i$-th coordinate is
the usual derivative $f'(\mathbf x;\e_i)$. When $f$ is not
differentiable at $\mathbf x$, there is not a unique tangent
hyperplane passing through $(f(\mathbf x),\mathbf x)\in
\R\times\R^{d-1}$. Instead, there are several supporting hyperplanes
passing through $(f(\mathbf x),\mathbf x)$, each one giving rise to
a different subgradient at $\mathbf x$. We let $\partial f(\mathbf
x)$ be the subdifferential of $f$ at $\mathbf x$, which is the set
of all subgradients $\mathbf x^*\in \R^{d-1}$, formally, all
$\mathbf x^*\in \R^{d-1}$ such that the graph of the affine function
$h(\mathbf z)=f(\mathbf x)+\langle \mathbf x^*,\mathbf z-\mathbf
x\rangle$
is a non-vertical supporting hyperplane to $\epi^* f$ at
$(f(\mathbf x),\mathbf x)$, which can be alternatively phrased as
all $\mathbf x^*\in \R^{d-1}$ such that
$$
f(\mathbf z)\geq f(\mathbf x)+\langle \mathbf x^*,\mathbf z-\mathbf
x\rangle\quad \mbox{ for all }\; \mathbf z\in \R^{d-1}.
$$
When
$d=2$, this is simply the set $\partial f(x)=[f_-(x),f_+(x)]$
consisting of all possible slopes of a tangent line passing through
$(f(x),x)$. For instance, if $f(x)=|x|-C$, then
$\partial f(0)=[-1,1]$, $\partial f(x)=\{-1\}$ for $x<0$, and
$\partial f(x)=\{1\}$ for $x>0$.

When $f$ is convex, it is differentiable a.e.~with $f'$ continuous
on the subset of points where it is defined. In fact, $f$ is
Lipschitz continuous in each variable, and thus absolutely
continuous, so that the Fundamental Theorem of Calculus holds. In
particular, if all partial derivatives are zero a.e., then $f$ must
be a constant function. The subdifferential is continuous in the
sense that, given any point $\mathbf x$ in the interior of the
domain of $f$ and any $\epsilon >0$, there exists a $\delta>0$ such
that \be\label{subdiff-cont}\partial f(\mathbf z)\subseteq \partial
f(\mathbf x)+\mathscr B_\epsilon \quad\mbox{ for all } \mathbf z\in
\mathbf x+\mathscr B_\delta,\ee  where $\mathscr B_\rho$ denotes an
open ball of radius $\rho$ (see e.g. \cite[Corollary
24.5.1]{convexcalculus}.) With regards to minimizing a convex
function, we have the rather striking property that a point $\mathbf
x$ is a global minimum for a convex function $f$ if and only if
$\mathbf x$ is a local minimum, which occurs precisely when $\mathbf
0\in \partial f(\mathbf x)$ (see e.g. \cite[Section
27]{convexcalculus}.)

For a subset $A\subseteq \R^d$ and $\lambda\geq 0$, we let
$(A)_\lambda=\bigcup_{\mathbf x\in A}(\mathbf x+\mathscr B_\lambda)$
denote the neighborhood of $A$ consisting of all points strictly
within distance $\lambda$ from a point of $A$. Then the
\emph{Hausdorff distance} between two sets $A,\,B\subseteq \R^d$ is
defined as $$\mathsf d_H(A,B):=\inf\{\lambda\geq 0\mid A\subseteq
(B)_\lambda \und B\subseteq (A)_\lambda\}.$$ When restricted to
closed subsets of $\R^d$, \ $\mathsf d_H(\cdot,\cdot)$ becomes a
metric; in particular, $\mathsf d_H(A,B)=0$, for closed subsets
$A,\,B\subseteq \R^d$, if and only if $A=B$.  Blaschke's Theorem
(see e.g. \cite{metric-geometry, convexity}) asserts that the
Hausdorff metric space is compact when restricted to convex bodies
all contained within some fixed closed ball in $\R^d$. In
particular, if $A_1\subseteq A_2\subseteq\ldots $ is an increasing
sequence of convex bodies all contained within some fixed closed
ball in $\R^d$, then $A_i\rightarrow A$, where $A$ is the closure of
$\bigcup_{i\geq 1} A_i$ and the limit is with respect to the
Hausdorff metric. Additionally, the limit of convex bodies is again
convex.

\subsection{A Sequence of Lemmas}
Our strategy is to first prove Theorem \ref{Dual-thm} when $A$ and
$B$ are the epigraphs of respective concave functions $f:S\subseteq
\R^{d-1}\rightarrow \R_{\geq 0}$ and $g:T\subseteq
\R^{d-1}\rightarrow \R_{\geq 0}$, and then extend to the more
general case. To do this, we break the majority of the proof into a
series of lemmas. Our first lemma below allows us to restrict to the
case when the domains $S$ and $T$ are homothetic. During the course
of the proof, an outline of the proof of Theorem \ref{Bonnesen-cor}
is recreated.

\medskip

\begin{lemma}\label{projection-is-homothetic-lemma} Let $A,\,B\subseteq \R^d$ be convex bodies. If
\be\label{lemma-bound}|A+B|= \left(M^{1/(d-1)}+N^{1/(d-1)}\right)^{d-1}\left(\frac{|A|}{M}+\frac{|B|}{N}\right),\ee where
$M=|\pi(A)|$ and $N=|\pi(B)|$, then $\pi(A)$ and $\pi(B)$ are homothetic.
\end{lemma}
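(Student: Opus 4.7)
My plan is to recreate the proof of Theorem B (Bonnesen II) via a hypograph reduction and the slice-wise Brunn concavity argument, while tracking the equality conditions. The projections $\pi(A)$ and $\pi(B)$ appear as the ``top'' slices of the hypograph bodies, and equality in Bonnesen II forces equality in the slice-wise Brunn-Minkowski inequality down to that top slice; the classical equality case of Brunn-Minkowski in $\R^{d-1}$ then yields the desired homothety.

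First, I would form the hypograph bodies
$$A' = \{(y,\mathbf{x}) \in \R\times\R^{d-1} : \mathbf{x} \in \pi(A),\ 0 \le y \le \ell_A(\mathbf{x})\},$$
and analogously $B'$, where $\ell_A(\mathbf{x}) = v_A(\mathbf{x}) - u_A(\mathbf{x})$ is the (concave) vertical fiber-length of $A$. These are convex bodies with $|A'| = |A|$, $\pi(A') = \pi(A)$, and the maximum $(d{-}1)$-dimensional slice of $A'$ parallel to $K = \pi(\R^d)$ equals $\pi(A)$ at $y=0$, of measure $M$; similarly for $B'$. Since $v_{A+B}$ (resp.~$u_{A+B}$) is the sup-convolution of $v_A,v_B$ (resp.~the inf-convolution of $u_A,u_B$), decoupling the sup/inf yields $\ell_{A+B}(\mathbf{z}) \ge \ell_A(\mathbf{x}) + \ell_B(\mathbf{y})$ for every decomposition $\mathbf{z} = \mathbf{x}+\mathbf{y}$, hence $\ell_{A+B} \ge \ell_A \boxplus \ell_B$ pointwise on $\pi(A)+\pi(B)$. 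Since $A'+B'$ is precisely the hypograph of $\ell_A \boxplus \ell_B$, integration gives $|A+B| \ge |A'+B'|$.

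Next I would recover the Bonnesen II bound by a Brunn-concavity parameterization applied to $A', B'$. Set $\phi_{A'}(y) = |(A')^y|^{1/(d-1)}$; this is concave and nonincreasing on $[0,\max \ell_A]$ with $\phi_{A'}(0) = M^{1/(d-1)}$, and likewise for $\phi_{B'}$. Parameterize $r \in [0,1]$ via $\phi_{A'}(y_1(r)) = r\,M^{1/(d-1)}$ and $\phi_{B'}(y_2(r)) = r\,N^{1/(d-1)}$ (choosing measurable branches on any plateaus; on a plateau the nested equi-volume slices coincide, so the slice at $y_1(r)$ is unambiguous). Brunn-Minkowski on the slice of $A'+B'$ at height $y_1(r)+y_2(r)$ then yields
$$\phi_{A'+B'}(y_1(r)+y_2(r)) \ge r\bigl(M^{1/(d-1)}+N^{1/(d-1)}\bigr),$$
and a change of variables in $|A'+B'| = \int \phi_{A'+B'}^{d-1}\,dy$ recovers the Bonnesen II bound.

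Under the assumed equality in Bonnesen II, the slice-wise Brunn-Minkowski inequality must be an equality for a.e.~$r \in [0,1]$, so $(A')^{y_1(r)}$ and $(B')^{y_2(r)}$ are homothetic convex bodies in $\R^{d-1}$ with forced ratio $\lambda = (M/N)^{1/(d-1)}$ dictated by the slice volumes. Taking $r_n \to 1^-$ inside the equality set, $y_1(r_n), y_2(r_n) \to 0$ and the slices converge in Hausdorff distance to $\pi(A), \pi(B)$; closure of the fixed-ratio homothety relation under Hausdorff limits (the translation vectors stay bounded by a centroid argument) yields $\pi(A)$ and $\pi(B)$ homothetic. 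The main obstacle I anticipate is the treatment of plateaus of $\phi_{A'}, \phi_{B'}$, where the parameterization branches are non-unique, and the clean propagation of a.e.~slice-equality to the limiting top slice $y=0$ via the Hausdorff limit.
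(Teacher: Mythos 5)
Your argument is correct and is essentially the paper's own proof: reduce to the hypograph (compression) bodies, recover Bonnesen's bound via slice-wise Brunn--Minkowski combined with the Dancs--Uhrin integral argument, and then extract the homothety of $\pi(A)$ and $\pi(B)$ by passing to the Hausdorff limit of slices near the top. The paper packages the limit step contrapositively as two claims (non-homothety is stable under small Hausdorff perturbations, and the slices vary Hausdorff-continuously at the top level), but this is the same content as your direct passage to the limit with bounded translation vectors.
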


\begin{proof}This is a simple consequence of compression techniques
and the proof of Bonnesen's Theorem as given in \cite{dancs-uhrin}.
We outline the details here.  Recalling that we have assumed
$\mathbf u=\e_0$ and writing a convex body $A$ using the notation of
\eqref{convex-graph-characterization},   we define
$$\C(A):=\{(y,\mathbf x)\in \R\times \R^{d-1}\mid \mathbf x\in
\pi(A),\,0\leq y\leq v_A(\mathbf x)-u_A(\mathbf x)\}.$$ It is easily
derived (see also \cite{oriol-2d-result}) that
\begin{align}\nn&|\C(A)|=|A| \;\;\mbox{ and } \;\; |\C(B)|=|B|,\\\nn
&M:=|\pi(A)|=|\pi(\C(A))|=\sup\{|(\mathbf x+H)\cap \C(A)|\mid \mathbf x\in \R^d\},\\\nn & N:=|\pi(B)|=|\pi(\C(B))|=\sup\{|(\mathbf x+H)\cap \C(B)|\mid \mathbf x\in \R^d\},\\\nn
&|A+B|\geq |\C(A)+\C(B)|,\end{align}
    where $H=\e_0^\bot$ is the orthogonal space to $\e_0$, which is spanned by
$\e_1,\ldots,\e_{d-1}$. For $z\in \R$, let $A(z)=\C(A)\cap
(z\e_0+H)$ and $B(z)=\C(B)\cap (z\e_0+H)$. Then
\ber\nn|\C(A)+\C(B)|&\geq& \int_{-\infty}^{+\infty}
\underset{x+y=z}{\sup}\left\{|A(x)+B(y)|\right\}
dz\\&\geq&\label{tusk1} \int_{-\infty}^{+\infty}
\underset{x+y=z}{\sup}\left\{\left(|A(x)|^{1/(d-1)}+|B(y)|^{1/(d-1)}
\right)^{d-1}\right\} dz \\ &\geq& \label{tusk2}
\left(M^{1/(d-1)}+N^{1/(d-1)}\right)^{d-1}\left(\frac{|A|}{M}+\frac{|B|}{N}\right),
\eer where \eqref{tusk1} follows by the Brunn-Minkowski Theorem
applied to each $A(x)+B(y)$, and \eqref{tusk2} follows by
\cite[Theorem 2.1]{dancs-uhrin} (as in the proof of Bonnesen's Bound
given in \cite{dancs-uhrin}). Consequently, in view of
\eqref{lemma-bound}, we see that equality must hold in
\eqref{tusk1}. The remainder of the proof now follows easily from
the following two basic claims concerning convex bodies.

\subsection*{Claim 1}
If  $X,\,Y\subseteq \R^{d-1}$ are convex bodies that are not
homothetic, then  there exists $\delta>0$ such that no two convex
bodies $C,\,D\subseteq \R^{d-1}$ with $\mathsf d_H(X,C)<\delta$ and
$\mathsf d_H(Y,D)<\delta$ are homothetic.

\begin{proof}
If the claim is false, then there exist two sequences of convex
bodies $\{C_i\}_{i\geq 1}$ and $\{D_i\}_{i\geq 1}$ such that
$C_i\rightarrow X$, $D_i\rightarrow Y$ and, for each $i\geq 1$, \
$C_i$ and $D_i$ are homothetic, so that $D_i=\alpha_iC_i+\mathbf
x_i$ for some $\alpha_i>0$ and $\mathbf x_i\in \R^{d-1}$. Since each
of the sequences $\{C_i\}_{i\geq 1}$ and $\{\alpha_iC_i+\mathbf
x_i\}_{i\geq 1}$ converges to a convex body, it is easily verified
that $\alpha_i\to \alpha$ and $\x_i\to \x$ for some $\alpha >0$ and
$\mathbf x\in \R^{d-1}$. Hence $Y=\alpha X+\mathbf x$, contrary to
the hypothesis.
\end{proof}

\subsection*{Claim 2} For any $\delta>0$, there exists an
$\epsilon>0$ such that $\mathsf d_H(A(0),A(x))<\delta$ and $\mathsf
d_H(B(0),B(y))<\delta$ for all $x,\,y\in [0,\epsilon)$.

\begin{proof} If the claim fails for (say) $A$, then we can find a
sequence $x_1>x_2>\ldots $, where $x_i\in \R_{> 0}$, such that
$x_i\rightarrow 0$ and $\mathsf d_H(A(0),A(x_i))\geq \delta>0$ for all
$i$. Since $x_1>x_2>\ldots $, it follows from the definition of
$A(x)$ that $A(x_1)\subseteq A(x_2)\subseteq \ldots $. Thus
$A(x_i)\rightarrow A'$, where $A'$ is the closure of $\bigcup_{i\geq
1} A(x_i)$. Since $x_i\rightarrow 0$ with $x_i> 0$, it follows
that $\bigcup_{i\geq 1} A(x_i)$ consist of all points $\mathbf x\in
\R^{d-1}$ such that $(y,\mathbf x)\in \C(A)$ for some $y>0$.
Consequently, as $\C(A)=-\epi^*(u_A-v_A)$ is a convex body (both
$-v_A$ and $u_A$ are convex functions), so that $u_A-v_A\leq 0$
cannot be the constant zero function, it follows by a simple
argument that $\mathsf{int}(A(0))\subseteq \bigcup_{i\geq 1}
A(x_i)$, whence $A'=A(0)$. But since $A(x_i)\rightarrow A'=A(0)$, it
now follows that $\mathsf d_H(A(x_i),A(0))\rightarrow 0$,
contradicting that $\mathsf d_H(A(0),A(x_i))\geq \delta>0$ for all
$i$. This completes the claim.
\end{proof}

\bigskip

We now complete the proof the Lemma. If by contradiction
$A(0)=\pi(A)$ and $B(0)=\pi(B)$ are not homothetic, then, by Claims
1 and 2 (take $X=A(0)$ and $Y=B(0)$ in Claim 1 to find the $\delta$
to be used for Claim 2), there is  some $\epsilon>0$ such that
$A(x)$ and $B(y)$ are not homothetic for all $x,\,y\in
[0,\epsilon)$. As a result, the application of the Brunn-Minkowski
Theorem to \eqref{tusk1} yielded a strict inequality for all for
$z\in [0,\epsilon)$, whence equality in \eqref{tusk1} is impossible,
contrary to our assumption.
\end{proof}

\medskip

The following lemma shows that vertical stretching preserves
equality \eqref{dual-hypothesis} provided $\pi(A)$ and $\pi(B)$ are
homothetic, which we will be able to assume using Lemma
\ref{projection-is-homothetic-lemma}. Not only does this show that
the sets described by Theorem \ref{Dual-thm} satisfy the equality
\eqref{dual-hypothesis}, but it will also play an important role in
the other direction of the proof of Theorem \ref{Dual-thm}, allowing
us to consider convex bodies sufficiently stretched and thereby
resolve a delicate technical difficulty with ease.

\medskip

\begin{lemma}\label{lemma-stretching-perseves-bonnbound} Let
$A,\,B,\,A',\,B'\subseteq \R^d$ be convex bodies and suppose that
$A$ and $B$ are vertical stretchings of $A'$ and $B'$, respectively.
Then
\ber &&|A+B|-\left(M^{1/(d-1)}+N^{1/(d-1)}\right)^{d-1}\left(\frac{|A|}{M}+\frac{|B|}{N}\right)\geq\nn\\\nn
&&|A'+B'|-\left(M^{1/(d-1)}+N^{1/(d-1)}\right)^{d-1}\left(\frac{|A'|}{M}+\frac{|B'|}{N}\right)
,\eer
 where
$M=|\pi(A)|=|\pi(A')|$ and $N=|\pi(B)|=|\pi(B')|$, with equality if and only if $\pi(A)=\pi(A')$ and $\pi(B)=\pi(B')$ are homothetic.
\end{lemma}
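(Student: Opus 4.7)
The plan is to recognize that vertical stretching is equivalent to Minkowski summation against a vertical segment, which reduces the lemma to a single application of the $(d-1)$-dimensional Brunn--Minkowski inequality on the projections. First I would observe that if $h_A, h_B \geq 0$ are the respective stretching amounts, then the graph description \eqref{convex-graph-characterization} of convex bodies gives at once $A = A' + [0, h_A]\e_0$ and $B = B' + [0, h_B]\e_0$. Consequently $A + B = A' + B' + [0, h_A + h_B]\e_0$, so $A + B$ is itself a vertical stretching of $A' + B'$ by amount $h_A + h_B$.

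The next step is to record the three volume identities
\[
|A| = |A'| + h_A M, \qquad |B| = |B'| + h_B N, \qquad |A + B| = |A' + B'| + (h_A + h_B)\,|\pi(A') + \pi(B')|,
\]
which follow by integrating vertical heights over the projections together with $\pi(A' + B') = \pi(A') + \pi(B')$. Setting $\mu = M^{1/(d-1)} + N^{1/(d-1)}$ and writing $D(X, Y) := |X + Y| - \mu^{d-1}\bigl(|X|/M + |Y|/N\bigr)$ for the Bonnesen defect, I expect these substitutions to telescope cleanly. Indeed, the contributions $h_A M$ and $h_B N$ are exactly cancelled by $\mu^{d-1}(h_A + h_B)$, leaving
\[
D(A, B) - D(A', B') = (h_A + h_B)\bigl[\,|\pi(A') + \pi(B')| - \mu^{d-1}\bigr].
\]

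At this point the Brunn--Minkowski inequality (Theorem \ref{brunn-minkowski-thm}) applied to the convex bodies $\pi(A'), \pi(B') \subseteq K \cong \R^{d-1}$ finishes the job: it gives $|\pi(A') + \pi(B')| \geq \mu^{d-1}$, with equality exactly when $\pi(A')$ and $\pi(B')$ are homothetic. Combining this with $h_A + h_B \geq 0$ yields both the claimed inequality and the stated equality characterization. There is no serious obstacle here: the main insight is simply the algebraic identification of vertical stretching with Minkowski summation against the segment $[0,h]\e_0$, after which the rest is a bookkeeping calculation plus a single appeal to Brunn--Minkowski on the $(d-1)$-dimensional projections.
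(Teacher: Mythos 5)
Your proof is correct and arrives at the same algebraic reduction as the paper: both establish the key volume identity $|A+B| - |A'+B'| = (h_A+h_B)\,|\pi(A')+\pi(B')|$, and both then telescope this against $|A|=|A'|+h_A M$ and $|B|=|B'|+h_B N$ so that the entire question collapses to the Brunn--Minkowski inequality for $\pi(A')$ and $\pi(B')$ in $\R^{d-1}$. Where you differ is in how the first identity is derived. The paper argues fiber by fiber: for each $\mathbf z\in\pi(A+B)$ it examines $\bigcup_{\mathbf x+\mathbf y=\mathbf z}\bigl((\pi^{-1}(\mathbf x)\cap A)+(\pi^{-1}(\mathbf y)\cap B)\bigr)$, notes that convexity of $A+B$ forces this union to be a single vertical segment, and checks that stretching lengthens that segment by exactly $h_A+h_B$ before integrating over $\mathbf z$. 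You instead observe up front that a vertical stretching by $h$ is precisely Minkowski summation with the segment $[0,h]\e_0$, so that $A=A'+[0,h_A]\e_0$, $B=B'+[0,h_B]\e_0$, hence $A+B=(A'+B')+[0,h_A+h_B]\e_0$, and the volume identity is immediate. Your formulation is cleaner and avoids the fiber-wise case analysis entirely, at no cost to the rest of the argument. One small caveat your write-up shares with the paper's own phrasing: when $h_A+h_B=0$ equality holds vacuously regardless of whether the projections are homothetic, so the stated \emph{if and only if} is only literally accurate when at least one stretching is nontrivial; this imprecision is harmless in the lemma's subsequent applications.
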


\begin{proof}
Suppose that $A$ is a stretching of $A'$ of amount $\alpha$ and $B$
is a stretching of $B'$ of amount $\beta$, where $\alpha\geq 0$ and
$\beta\geq 0$. Then
\ber\label{five}|A|&=&|A'|+|\pi(A')|\alpha=|A'|+M\alpha \und \\
|B|&=&\label{four}|B'|+ |\pi(B')|\beta=|B'|+N\beta.\eer
For $\mathbf
z\in \pi(A+B)$, observe that \be\label{stave}\pi^{-1}(\mathbf z)\cap
(A+B)=\bigcup_{\mathbf x+\mathbf y=\mathbf z}
\left((\pi^{-1}(\mathbf x)\cap A)+(\pi^{-1}(\mathbf y)\cap
B)\right).\ee Both $\pi^{-1}(\mathbf x)\cap A$ and $\pi^{-1}(\mathbf
y)\cap B$ are vertical line segments (as $A$ and $B$ are convex).
Moreover, since $A+B$ is also convex, their union in \eqref{stave}
must again be a vertical line segment. The vertical line segment
$\pi^{-1}(\mathbf x)\cap A$ is obtained by extending  the line
segment $\pi^{-1}(\mathbf x)\cap A'$ by an additional length of
$\alpha$ appended onto the top of the segment $\pi^{-1}(\mathbf
x)\cap A'$; the line segment $\pi^{-1}(\mathbf x)\cap B$ is likewise
obtained from  $\pi^{-1}(\mathbf x)\cap B'$ appending on an
additional length of $\beta$ to the top of
 $\pi^{-1}(\mathbf x)\cap B'$. Thus, since the union in
 \eqref{stave} is a single vertical line segment, it follows that
 $$|\bigcup_{\mathbf x+\mathbf y=\mathbf z} \left((\pi^{-1}(\mathbf
 x)\cap A)+(\pi^{-1}(\mathbf y)\cap B)\right)|=|\bigcup_{\mathbf
 x+\mathbf y=\mathbf z} \left((\pi^{-1}(\mathbf x)\cap
 A')+(\pi^{-1}(\mathbf y)\cap B')\right)|+(\alpha+\beta)$$  for each
 $\mathbf z\in \pi(A+B)=\pi(A'+B')=\pi(A')+\pi(B')$. Consequently,
\ber|A+B|&=&\nn|A'+B'|+|\pi(A')+\pi(B')|(\alpha+\beta)\\\label{three}
&\geq &|A'+B'|+(M^{1/(d-1)}+N^{1/(d-1)})^{d-1}(\alpha+\beta),\eer
where \eqref{three} is obtained by applying the Brunn-Minkowski
Theorem, with equality if and only if $\pi(A)=\pi(A')$ and
$\pi(B)=\pi(B')$ are homothetic.

From \eqref{three}, we conclude that
\ber |A+B|-|A'+B'|&=&|\pi(A')+\pi(B')|(\alpha+\beta)\nn\\
&\geq& \label{one} \left(M^{1/(d-1)}+ N^{1/(d-1)}\right)^{d-1}(\alpha+\beta),\eer
with equality if and only if $\pi(A)=\pi(A')$ and $\pi(B)=\pi(B')$ are homothetic.
Also, \eqref{five} and \eqref{four} yield
\ber
&&\nn\left(M^{1/(d-1)}+
N^{1/(d-1)}\right)^{d-1}\left(
\left(\frac{|A|}{M}
+\frac{|B|}{N}\right)-
\left(\frac{|A'|}{M}
+\frac{|B'|}{N}\right)\right)=\\\label{two} && \left(M^{1/(d-1)}+
N^{1/(d-1)}\right)^{d-1}(\alpha+\beta).\eer
Comparing \eqref{one} and \eqref{two} completes the lemma.
\end{proof}

\medskip

Lemma \ref{lemma-basecase-d=2} provides the base case for the
inductive proof of Lemma \ref{lemma-basecase}, which will be our
main argument, combined with standard approximation arguments, for
characterizing the case of equality in Bonnesen's Bound for
epigraphs.

\medskip

\begin{lemma}\label{lemma-basecase-d=2}
Let $m,\,n> 0$ and let $f: [0,m]\rightarrow \R_{\geq 0}$ and $g:[0,n]\rightarrow \R_{\geq 0}$ be concave functions.
Let $A,\,B\subseteq \R^2$ be defined as $A=-\epi^*( -f)$ and $B=-\epi^*(- g)$.
\begin{itemize}
\item[(a)] Then \be\label{syrup}|A+B|\geq (m+n)\left(\frac{|A|}{m}+\frac{|B|}{n}\right)
+\Delta,\ee where $$\Delta= \left(nf(m)-\frac{n}{m}\int_0^m f(x)\;dx\right)+\left(m g(0)-\frac{m}{n}\int_0^n g(x) \;dx\right).$$

\item[(b)]In particular, if $f'_+(x)\geq g'_+(y)+\epsilon$ for all $x\in [0,m)$ and $y\in [0,n)$, where $\epsilon\geq 0$, then
$$|A+B|\geq (m+n)\left(\frac{|A|}{m}+\frac{|B|}{n}\right)
+\frac{mn}{2}\epsilon.$$
\end{itemize}
\end{lemma}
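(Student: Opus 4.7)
The plan is to handle (a) and (b) in turn.

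\textbf{Part (a).} I would first simplify the right-hand side algebraically. A direct expansion gives
$$(m+n)\left(\frac{|A|}{m}+\frac{|B|}{n}\right) = |A|+|B|+\frac{n}{m}\int_0^m f + \frac{m}{n}\int_0^n g,$$
so after adding $\Delta$ the integral terms cancel and (a) reduces to
$$|A+B| \geq |A|+|B|+nf(m)+mg(0).$$
To establish this, I would exhibit two essentially disjoint subsets of $A+B$ whose areas sum to the right-hand side. Define
$$P_1 = \{(y,x):\, x\in[0,m],\; 0\leq y\leq f(x)+g(0)\} \und P_2 = \{(y,x):\, x\in[m,m+n],\; 0\leq y\leq f(m)+g(x-m)\}.$$
For $(y,x)\in P_1$, the splitting $(y,x)=(y_1,x)+(y_2,0)$ with $y_2\in[\max(0,y-f(x)),\min(g(0),y)]$ (non-empty precisely when $y\leq f(x)+g(0)$) places $(y_1,x)\in A$ and $(y_2,0)\in B$, so $P_1\subseteq A+B$. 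An analogous splitting $(y,x)=(y_1,m)+(y_2,x-m)$ yields $P_2\subseteq A+B$. As $P_1$ and $P_2$ occupy the $x$-strips $[0,m]$ and $[m,m+n]$, which overlap in a set of measure zero,
$$|A+B| \geq |P_1|+|P_2| = (|A|+mg(0))+(nf(m)+|B|),$$
which is precisely what we need.

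\textbf{Part (b).} It suffices to show that the $\Delta$ of part (a) satisfies $\Delta \geq mn\epsilon/2$ whenever the slope hypothesis holds. Using the absolute continuity of concave functions together with Fubini, I would rewrite
$$f(m)-\frac{1}{m}\int_0^m f = \frac{1}{m}\int_0^m t\,f'_+(t)\,dt, \qquad g(0)-\frac{1}{n}\int_0^n g = -\frac{1}{n}\int_0^n (n-s)\,g'_+(s)\,ds,$$
which yields $\Delta = \frac{n}{m}\int_0^m t\,f'_+(t)\,dt - \frac{m}{n}\int_0^n (n-s)\,g'_+(s)\,ds$. Setting $\alpha=\inf_{t\in[0,m)} f'_+(t)$ and $\beta=\sup_{s\in[0,n)} g'_+(s)$, the hypothesis reads $\alpha \geq \beta+\epsilon$. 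Since $t,\,n-s\geq 0$, the pointwise estimates $f'_+(t)\geq \alpha$ and $g'_+(s)\leq \beta$ give $\frac{n}{m}\int_0^m t\,f'_+(t)\,dt \geq \frac{mn\alpha}{2}$ and $\frac{m}{n}\int_0^n (n-s)g'_+(s)\,ds \leq \frac{mn\beta}{2}$, so $\Delta \geq \frac{mn(\alpha-\beta)}{2}\geq \frac{mn\epsilon}{2}$.

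\textbf{Main obstacle.} The only real subtlety lies in choosing the subsets $P_1,\,P_2$ in (a): once one identifies the anchor points $(0,0)\in B$ and $(0,m),\,(f(m),m)\in A$ to serve as translates lifting $A$ and $B$ into the two strips, the inclusions $P_1,P_2\subseteq A+B$ are routine pointwise checks and the area comparison is a one-line integration. Part (b) is then a brief Fubini/calculus exercise exploiting the monotonicity of the one-sided derivatives forced by concavity.
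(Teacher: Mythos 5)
Your proof is correct and takes essentially the same approach as the paper: your sets $P_1$ and $P_2$ in part (a) are precisely the paper's $A + ([0,g(0)]\times\{0\})$ and $([0,f(m)]\times\{m\}) + B$, and the reduction to $|A+B|\geq |A|+|B|+nf(m)+mg(0)$ is the same algebraic simplification. In part (b) you rewrite both terms of $\Delta$ via Fubini and bound each integral by the extremal slopes $\alpha=\inf f'_+$, $\beta=\sup g'_+$; this is a slightly more symmetric presentation of the paper's computation (which applies the Fundamental Theorem of Calculus only to $f$, substitutes $y\mapsto \frac{m}{n}x$, and then bounds the $g$-term directly by concavity via $g'_+(0)\geq \frac{g(x)-g(0)}{x}$), but the underlying idea and the resulting estimate are identical.
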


\begin{proof}
We first observe that \ber \nn A+( [0,g(0)] \times\{0\})&\subseteq&
(A+B)\cap (\R\times [0,m]),\\
\nn ( [0,f(m)]\times\{m\} )+B&\subseteq& (A+B)\cap (\R\times [m,m+n] ). \eer
Therefore,
\ber\nn
|A+B|&\geq& |A+([0,g(0)]\times \{0\})|+|([0,f(m)]\times \{m\})+B|
\\\nn &=& (|A|+m g(0))+(|B|+n f(m)),\label{old-estimate}\eer
and we complete the proof of part (a) as follows:
\begin{eqnarray}\nn
|A+B|-(m+n)(\frac{|A|}{m}+\frac{|B|}{n} )
& \ge  &|A|+mg(0)+|B|+n f(m)-(m+n)(\frac{|A|}{m}+\frac{|B|}{n})\\\nn
&=&mg(0)+n f(m)-\frac{n}{m}|A|-\frac{m}{n}|B|\\\nn
&=&
n\left(f(m)-\frac{|A|}{m}\right)+m\left(g(0)-\frac{|B|}{n}\right)=\Delta.
\end{eqnarray}

It remains to prove part (b). Thus suppose $f'_+(x)\geq
g'_+(y)+\epsilon$ for all $x\in [0,m)$ and $y\in [0,n)$, where
$\epsilon\geq 0$. The product of absolutely continuous functions
defined over a closed, bounded  interval is absolutely continuous on this
interval. Thus, since $f:[0,m]\rightarrow \R_{\geq 0}$ is a concave
function, and thus absolutely continuous (and hence differentiable
a.e.), it follows that $x f(x):[0,m]\rightarrow \R_{\geq 0}$ is also
absolutely continuous (and hence differentiable a.e.). As a result,
noting that $(x f(x))'=f(x)+x f'(x)$ a.e., it follows from the
Fundamental Theorem of Calculus that $$f(m)=\frac{1}{m}\int_{0}^m (x
f(x))'\;dx=\frac{1}{m}\int_{0}^m x f'(x)\;dx+\frac{1}{m}\int_{0}^m
f(x)\;dx.$$  Hence we may rewrite $\Delta$ as
$$\Delta=\frac{n}{m}\int_0^m yf'(y)\;dy-\frac{m}{n}\int_0^n g(x)\;dx
+m g(0).$$ Applying the substitution $y\mapsto \frac{m}{n}x$ to the
first integral and using the fact that $f'(x)=f'_+(x)$ a.e., we
obtain
\be\label{stickpep}\Delta=m g(0)+\frac{m}{n}\int_0^n \left(x
f'_+(\frac{m}{n}x)-g(x)\right)\;dx.\ee

Since  $f'_+(x)\geq g'_+(y)+\epsilon$ for all $x\in [0,m)$ and $y\in
[0,n)$, it follows that \be\label{fun0}xf'_+(\frac{m}{n}x)\geq x
g'_+(0)+x\epsilon,\ee for all $x\in [0,n)$.
Since $g$ is concave,
$\frac{g(x)-g(0)}{x}$ is a non-increasing function of $x$, whence
\be\label{fun1}g'_+(0)=\underset{\lambda>0}{\lim_{\lambda\to
0}}\frac{g(\lambda)-g(0)}{\lambda}\geq \frac{g(x)-g(0)}{x}\ee for
all $x\in (0,n)$.
Applying the estimates \eqref{fun0} and \eqref{fun1} to \eqref{stickpep}, we obtain
\ber\nn\Delta&\geq& m g(0)+\frac{m}{n}\int_0^n \left(x
g'_+(0)+x\epsilon-g(x)\right)\;dx\\
&\geq& m g(0)+\frac{m}{n}\int_0^n
\left(-g(0)+x\epsilon\right)\;dx=\frac{mn}{2}\epsilon,
\eer
 which combined with \eqref{syrup} implies the desired bound.
\end{proof}

\medskip

The proof of the following lemma essentially contains a proof of
Theorem \ref{Bonnesen-cor} for $d\geq 3$ using the case $d=2$ as the
base of an inductive argument. The inductive application of Theorem
\ref{Bonnesen-cor} is used to make a kind of  $(d-2)$-dimensional
compression possible.

\medskip

\begin{lemma}\label{lemma-basecase}
Let $d\geq 2$, let $m,\,n> 0$ and let $f: [0,m]^{d-1}\rightarrow \R_{\geq 0}$ and $g:[0,n]^{d-1}\rightarrow \R_{\geq 0}$ be concave functions.
Let $A,\,B\subseteq \R^d$ be defined as $A=-\epi^*(-f)$ and $B=-\epi^*(-g)$. Suppose
 $$f'(\mathbf x;\e_1)\geq g'(\mathbf y;\e_1)+\epsilon
 \quad\mbox{ for all }\;\mathbf x\in [0,m)^{d-1}\und \mathbf y\in [0,n)^{d-1},$$ where $\epsilon\geq 0$.
Then $$|A+B|\geq (m+n)^{d-1}\left(\frac{|A|}{m^{d-1}}+\frac{|B|}{n^{d-1}}\right)
+\frac{mn}{2}(m+n)^{d-2}\epsilon.$$
\end{lemma}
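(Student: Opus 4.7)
When $d=2$ the claim is exactly Lemma~\ref{lemma-basecase-d=2}(b), so I assume $d\geq 3$. The plan is to slice $A$ and $B$ along the $\e_1$-direction, invoke Theorem~\ref{Bonnesen-cor} on each pair of $(d-1)$-dimensional slices, and then recast the resulting one-parameter family of inequalities as a single two-dimensional inequality to which Lemma~\ref{lemma-basecase-d=2}(b) applies. This implements the ``kind of $(d-2)$-dimensional compression'' mentioned in the remark preceding the lemma.

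For $t\in[0,m]$ and $s\in[0,n]$, set $A^{(t)}=A\cap\{x_1=t\}$ and $B^{(s)}=B\cap\{x_1=s\}$; these are $(d-1)$-dimensional convex bodies of epigraph form, with $A^{(t)}=-\epi^*(-f(t,\cdot))$ over $[0,m]^{d-2}$ and $B^{(s)}=-\epi^*(-g(s,\cdot))$ over $[0,n]^{d-2}$. Define the profile functions
$$F(t):=|A^{(t)}|=\int_{[0,m]^{d-2}}f(t,\mathbf{x}')\,d\mathbf{x}',\qquad G(s):=|B^{(s)}|=\int_{[0,n]^{d-2}}g(s,\mathbf{y}')\,d\mathbf{y}',$$
which are concave on $[0,m]$ and $[0,n]$ respectively, since concavity of $f$ and $g$ in all variables is preserved under partial integration. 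Applying Theorem~\ref{Bonnesen-cor} in dimension $d-1$ to each pair $A^{(t)},B^{(s)}$ (whose $(d-2)$-dimensional projections $[0,m]^{d-2}$ and $[0,n]^{d-2}$ have measures $m^{d-2}$ and $n^{d-2}$) gives
$$|A^{(t)}+B^{(s)}|\geq(m+n)^{d-2}\left(\frac{F(t)}{m^{d-2}}+\frac{G(s)}{n^{d-2}}\right).$$
Since $(A+B)\cap\{x_1=r\}\supseteq A^{(t)}+B^{(s)}$ whenever $t+s=r$, Fubini's theorem yields
$$|A+B|\geq(m+n)^{d-2}\int_0^{m+n}\sup_{t+s=r}\bigl(\hat F(t)+\hat G(s)\bigr)\,dr,$$
where $\hat F:=F/m^{d-2}$ and $\hat G:=G/n^{d-2}$ are concave on $[0,m]$ and $[0,n]$.

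Now let $\tilde A:=-\epi^*(-\hat F)\subseteq\R^2$ and $\tilde B:=-\epi^*(-\hat G)\subseteq\R^2$. A direct slice computation identifies $|\tilde A+\tilde B|=\int_0^{m+n}\sup_{t+s=r}(\hat F(t)+\hat G(s))\,dr$, while $|\tilde A|=|A|/m^{d-2}$ and $|\tilde B|=|B|/n^{d-2}$. It remains to verify that the derivative hypothesis transfers, namely that $\hat F'_+(t)\geq\hat G'_+(s)+\epsilon$ for all $t\in[0,m)$ and $s\in[0,n)$. Because $f(\cdot,\mathbf{x}')$ is concave in its first variable, the difference quotients $(f(t+\lambda,\mathbf{x}')-f(t,\mathbf{x}'))/\lambda$ are monotone in $\lambda>0$, so monotone convergence gives $F'_+(t)=\int_{[0,m]^{d-2}}f'_+((t,\mathbf{x}');\e_1)\,d\mathbf{x}'$, and likewise for $G'_+(s)$. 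Integrating the pointwise hypothesis $f'_+((t,\mathbf{x}');\e_1)\geq g'_+((s,\mathbf{y}');\e_1)+\epsilon$ first over $\mathbf{x}'\in[0,m]^{d-2}$ and then averaging over $\mathbf{y}'\in[0,n]^{d-2}$ yields $\hat F'_+(t)\geq\hat G'_+(s)+\epsilon$, as required. Applying Lemma~\ref{lemma-basecase-d=2}(b) to $\tilde A,\tilde B$ then gives $|\tilde A+\tilde B|\geq(m+n)(|A|/m^{d-1}+|B|/n^{d-1})+\tfrac{mn}{2}\epsilon$, and combining with the earlier lower bound for $|A+B|$ produces the stated inequality. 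The main technical obstacle I anticipate is the derivative transfer step—specifically, the interchange of one-sided derivatives with partial integration and the handling of possibly singular boundary values of $f$ and $g$—but these reduce to standard convex-analysis facts recorded in the preliminaries.
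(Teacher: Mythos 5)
Your proof is correct, but it takes a genuinely different route from the paper's. The paper slices \emph{perpendicular} to $\e_1$ (along the last coordinate $x_{d-1}$), so that the directional-derivative hypothesis $f'(\cdot;\e_1)\geq g'(\cdot;\e_1)+\epsilon$ passes verbatim to the $(d-1)$-dimensional slices; it then applies the lemma itself inductively to each slice pair (carrying the $\epsilon$-term through every dimension) and finishes by applying the plain Theorem \ref{Bonnesen-cor} to a pair of 2D epigraphs built from the slice-volume profiles. You slice \emph{along} $\e_1$, apply only the plain Theorem \ref{Bonnesen-cor} to each $(d-1)$-dimensional slice pair, and then concentrate the entire $\epsilon$-gain at the final step by applying Lemma \ref{lemma-basecase-d=2}(b) to the 2D profiles $\hat F,\hat G$. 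This avoids induction on $d$ altogether and reduces directly to the base case, which is arguably cleaner. The price is the derivative-transfer step: you must show that the slope gap survives integration, i.e.\ that $\hat F'_+(t)\geq\hat G'_+(s)+\epsilon$, which requires interchanging one-sided differentiation with the integral over the $(d-2)$-dimensional cross-section. Your monotone-convergence justification is sound (the difference quotients of a concave function increase monotonically to the right derivative as $\lambda\downarrow 0$, with a finite integrable lower bound given by the quotient at any fixed $\lambda_0<m-t$), and possible blow-up of $f'_+$ at the boundary only strengthens the inequality. The paper's slicing choice makes this step unnecessary, at the cost of a more involved inductive bookkeeping; both are valid.
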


\begin{proof}When $d=2$, Lemma \ref{lemma-basecase-d=2} yields the
desired bound. We assume $d\geq 3$ and proceed by induction on $d$.
For $x\in [0,m]$ and $y\in [0,n]$, let $f_x:[0,m]^{d-2}\rightarrow
\R_{\geq 0}$ and $g_y:[0,n]^{d-2}\rightarrow \R_{\geq 0}$ be defined
by $f_x(x_1,\ldots,x_{d-2})=f(x_1,\ldots,x_{d-2},x)$ and
$g_y(y_1,\ldots,y_{d-2})=g(y_1,\ldots,y_{d-2},y)$. Then
$-\epi^*(-f_x)=(\R^{d-1}\times\{x\})\cap A$ is the $x$-section of
$A$, and we will denote this set by $A(x)=(\R^{d-1}\times\{x\})\cap A$.
Likewise define $B(y)=-\epi^*(-g_y)=(\R^{d-1}\times\{y\})\cap B$
and, for $z\in [0,m+n]$, let $(A+B)(z)=(\R^{d-1}\times\{z\})\cap
(A+B)$. Then $(A+B)(z)=\bigcup_{x+y=z}(A(x)+B(y))$. Consequently,
\be\label{measure-sections}|A+B|\geq
\int_0^{m+n}\underset{x+y=z}\sup\{|A(x)+B(y)|\}\;dz.\ee

By induction hypothesis, we know
$$|A(x)+B(y)|\geq
(m+n)^{d-2}\left(\frac{|A(x)|}{m^{d-2}}+\frac{|B(y)|}{n^{d-2}}\right)
+\frac{mn}{2}(m+n)^{d-3}\epsilon ,$$ for all $x\in [0,m]$ and $y\in
[0,n]$. Combining the above inequality  with
\eqref{measure-sections} gives
\begin{align}\label{measure-sections-II}|A+B|\geq
\int_0^{m+n}\underset{x+y=z}\sup\left\{\frac{(m+n)^{d-2}}{m^{d-2}}|A(x)|+\frac{(m+n)^{d-2}}{m^{d-2}}
|B(y)|\right\}\;dz\\ +\frac{mn}{2}(m+n)^{d-2}\epsilon.\nn\end{align}
Let $\tilde{f}:[0,m]\rightarrow \R_{\geq 0}$ be the function defined
by $$\tilde{f}(x)=\frac{(m+n)^{d-2}}{m^{d-2}}|A(x)|$$ and let
$\tilde{g}:[0,n]\rightarrow \R_{\geq 0}$ be the function defined by
$$\tilde{g}(y)=\frac{(m+n)^{d-2}}{m^{d-2}}
|B(y)|.$$
Let $\tilde{A}=-\epi^*(-\tilde{f})$ and
$\tilde{B}=-\epi^*(-\tilde{g})$. As $A$ and $B$ are convex bodies,
the functions  $|A(x)|$ and $|B(y)|$ are both integrable, and thus
$\tilde{f}$ and $\tilde{g}$ as well. Moreover,
\ber\nn|\tilde{A}+\tilde{B}|&=&\int_{0}^{m+n}
\underset{x+y=z}{\sup}\{\tilde{f}(x)+\tilde{g}(y)\}\;dz\\
&=&
\int_{0}^{m+n} \underset{x+y=z}{\sup}\left\{\frac{(m+n)^{d-2}}{m^{d-2}}|A(x)|+\frac{(m+n)^{d-2}}{m^{d-2}}
|B(y)|\right\}\;dz.\label{stuckkart}
\eer
Applying Theorem \ref{Bonnesen-cor}, which (as mentioned in the
introduction) holds more generally for any compact subsets $A$ and
$B$, to $\tilde{A}+\tilde{B}$, we conclude that
\ber\nn
|\tilde{A}+\tilde{B}|&\geq&
(m+n)\left(\frac{|\tilde{A}|}{m}+\frac{|\tilde{B}|}{n}\right)\\
&=&(m+n)\left(\frac{(m+n)^{d-2}}{m^{d-1}}\int_0^m |A(x)|\;dx+\frac{(m+n)^{d-2}}{n^{d-1}}\int_0^n |B(y)|\;dy\right)\nn\\
&=&(m+n)^{d-1}\left(\frac{|A|}{m^{d-1}}+\frac{|B|}{n^{d-1}}\right).\label{stuckart}
\eer
Combining \eqref{measure-sections-II}, \eqref{stuckkart}  and \eqref{stuckart} yields the desired
lower bound for $|A+B|$, completing the proof.
\end{proof}

\medskip

\subsection*{Completion of the Proof}

We can now proceed with the proof of Theorem \ref{Dual-thm}, first
in the case when  $A$ and $B$ are both epigraphs.

\medskip

\begin{lemma}\label{lemma-function-case}
Let $d\geq 2$, let $S,\,T\subseteq \R^{d-1}$ be convex bodies, and
let $f: S\rightarrow \R_{\geq 0}$ and $g:T\rightarrow \R_{\geq 0}$
be concave functions. Let $A,\,B\subseteq \R^d$ be defined as
$A=-\epi^*(-f)$ and $B=-\epi^*(-g)$. If $$|A+B|=
(|S|^{1/(d-1)}+|T|^{1/(d-1)})^{d-1}\left(\frac{|A|}{|S|}+
\frac{|B|}{|T|}\right),$$ then $S$ and $T$ are homothetic and the
graphs of  $f$ and $g$ are also homothetic, i.e., $$f(\mathbf
x)=\lambda g(\frac{1}{\lambda}(\mathbf x-\mathbf x_0))+C\quad\mbox{
for all }\; \mathbf x\in S,$$ where $C=\frac{|A|}{|S|}-\frac{\lambda
|B|}{|T|}$ and $S=\lambda T+\mathbf x_0$ for some $\lambda>0$ and
$\mathbf x_0\in \R^d$.
\end{lemma}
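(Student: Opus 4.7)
My approach is to reduce the lemma to showing that the gradients of $f$ and an appropriate rescaling of $g$ agree almost everywhere on $\mathsf{int}(S)$, and then to derive this gradient agreement by contradiction using Lemma \ref{lemma-basecase}. By Lemma \ref{projection-is-homothetic-lemma}, the equality hypothesis forces the shadows $S = \pi(A)$ and $T = \pi(B)$ to be homothetic, so $S = \lambda T + \mathbf{x}_0$ for some $\lambda > 0$ and $\mathbf{x}_0 \in \R^{d-1}$. I define the candidate $\tilde g : S \to \R_{\geq 0}$ by $\tilde g(\mathbf{x}) = \lambda g\bigl(\frac{\mathbf{x}-\mathbf{x}_0}{\lambda}\bigr)$; this is concave, and its one-sided directional derivatives satisfy $\tilde g'(\mathbf{x};\mathbf{v}) = g'\bigl(\frac{\mathbf{x}-\mathbf{x}_0}{\lambda};\mathbf{v}\bigr)$. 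The lemma will follow once $f - \tilde g$ is shown to be constant on $S$, since the value $C = \frac{|A|}{|S|} - \frac{\lambda |B|}{|T|}$ can then be recovered by integrating $f = \tilde g + C$ over $S$ and applying the change of variables $\mathbf{y} = \frac{\mathbf{x}-\mathbf{x}_0}{\lambda}$ together with $|S| = \lambda^{d-1}|T|$.

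To prove $f - \tilde g$ is constant, it suffices to show $\nabla f = \nabla \tilde g$ almost everywhere on $\mathsf{int}(S)$, since the difference of two concave functions is locally Lipschitz, hence absolutely continuous in each variable, and zero gradient then forces constancy on the connected set $S$. Arguing by contradiction, suppose there is a point $\mathbf{x}^* \in \mathsf{int}(S)$ at which $f$ and $\tilde g$ are both differentiable and a direction (which, after an orthogonal change of coordinates in $\R^{d-1}$ that preserves all hypotheses, I may take to be $\e_1$) with $f'(\mathbf{x}^*;\e_1) > \tilde g'(\mathbf{x}^*;\e_1) + 4\eta$ for some $\eta > 0$. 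By the subdifferential continuity estimate \eqref{subdiff-cont}, I can find small axis-parallel cubes $U \subseteq \mathsf{int}(S)$ around $\mathbf{x}^*$ and $V = \lambda^{-1}(U - \mathbf{x}_0) \subseteq \mathsf{int}(T)$ around $\frac{\mathbf{x}^*-\mathbf{x}_0}{\lambda}$ on which the uniform gap $f'(\mathbf{x};\e_1) \geq g'(\mathbf{y};\e_1) + \eta$ holds for every $\mathbf{x} \in U$ and $\mathbf{y} \in V$. Lemma \ref{lemma-basecase}, applied to the translates of $f|_U$ and $g|_V$ to cubes of the form $[0,m]^{d-1}$ and $[0,n]^{d-1}$, then produces a strict improvement of the local Bonnesen-type bound for the sub-epigraphs $A_U := -\epi^*(-f|_U)$ and $B_V := -\epi^*(-g|_V)$.

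The main obstacle is to propagate this local strict improvement into a strict improvement of Bonnesen's bound for the full pair $(A, B)$, thereby contradicting the assumed equality. My plan here is to retrace the chain of inequalities used to derive Bonnesen's bound within Lemma \ref{projection-is-homothetic-lemma}, namely the slicewise Brunn--Minkowski estimate followed by the Dancs--Uhrin inequality: equality in the global bound forces equality at every stage. Because the sub-epigraphs $A_U, B_V$ contribute to a positive-measure family of vertical slices of $A$ and $B$, the strict excess obtained from Lemma \ref{lemma-basecase} survives the integration and yields a strict inequality in $|A+B|$, the desired contradiction. An alternative route, suggested by the comment preceding Lemma \ref{lemma-basecase} about ``standard approximation arguments,'' is to cover $(S, T)$ by axis-parallel cubes and apply Lemma \ref{lemma-basecase} directly to each pair; either way, once $\nabla f = \nabla \tilde g$ a.e.\ is established, the constant-difference conclusion and the identification of $C$ follow from the computations already outlined.
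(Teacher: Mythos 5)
Your overall plan matches the paper's: reduce to showing $\nabla f = \nabla \tilde g$ a.e., locate a differentiable point with a directional-derivative gap, use the subdifferential continuity estimate \eqref{subdiff-cont} to make the gap uniform on a small box, then invoke Lemma \ref{lemma-basecase}. But the crucial ``propagation'' step, where you turn the local strict improvement into a contradiction with the global equality, is left hanging between two routes, and only one of them works.

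The primary route you describe, retracing the slicewise Brunn--Minkowski plus Dancs--Uhrin chain from Lemma \ref{projection-is-homothetic-lemma}, has a genuine mismatch. In that chain the sets $A(x)$, $B(y)$ are sections of $\C(A)$, $\C(B)$ by hyperplanes orthogonal to $\e_0$, i.e.\ superlevel sets $\{\mathbf z \in S : f(\mathbf z)\geq x\}$; these are horizontal slices. The local improvement from Lemma \ref{lemma-basecase} concerns the sub-epigraphs $A_U$, $B_V$ over vertical columns $\R\times U$ and $\R\times V$, which cut across every horizontal slice rather than isolating a set of them. There is no direct way for the excess from $|A_U+B_V|$ to surface as a strict inequality in the term $\sup_{x+y=z}\bigl(|A(x)|^{1/(d-1)}+|B(y)|^{1/(d-1)}\bigr)^{d-1}$ for a positive measure of $z$, so this route does not obviously close. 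Your phrase ``vertical slices'' suggests the two decompositions are being conflated.

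The alternative route you mention in passing is the paper's actual argument, but as stated it skips the details that make it work. One must partition $S$ by dyadic cubes of side $\delta/2^k$ and partition $T$ by their $\frac{1}{\lambda}$-dilates; this \emph{homothetic} choice of partitions is what makes the sum-projections $\mathfrak b + \frac{1}{\lambda}\mathfrak b$ tile $S+T$ with only measure-zero overlaps, so that $|A+B|$ is bounded below by $\sum_{\mathfrak b}|A_{\mathfrak b}+B_{\frac{1}{\lambda}\mathfrak b}|$. One then applies Lemma \ref{lemma-basecase} with the strict $\epsilon$ only to the union of boxes covering the bad neighborhood, and Theorem \ref{Bonnesen-cor} (in its compact-set form) to the remaining boxes, since the derivative hypothesis $f'(\mathbf x;\e_1)\geq g'(\mathbf y;\e_1)$ required by Lemma \ref{lemma-basecase} is not available there. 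Finally, one sends $k\to\infty$ so that $\sum_{\mathfrak b}|A_{\mathfrak b}|\to|A|$ and $\sum_{\mathfrak b}|B_{\frac{1}{\lambda}\mathfrak b}|\to|B|$, producing a strict lower bound exceeding the Bonnesen value by the fixed amount $\frac{\delta^d(\lambda+1)^{d-2}}{2\lambda^{d-1}}\epsilon$. Without the homothetic-tiling observation and the mixed use of Lemma \ref{lemma-basecase} and Theorem \ref{Bonnesen-cor}, the cube-covering route is not yet a proof.
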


\begin{proof}
From Theorem \ref{Bonnesen-cor}, we know that
$$|A+B|\geq
(|S|^{1/(d-1)}+|T|^{1/(d-1)})^{d-1}
\left(\frac{|A|}{|S|}+\frac{|B|}{|T|}\right).$$
 We wish to
characterize when equality holds. By Lemma
\ref{projection-is-homothetic-lemma}, equality in the bound implies
$S$ and $T$ are homothetic, say $S=\lambda T+\mathbf x_0$ with
$\lambda>0$ and $\mathbf x_0\in \R^d$. Hence
\be\label{bebo}|S|=\lambda^{d-1}|T|.\ee

By translating appropriately, we may w.l.o.g. assume $\mathbf
x_0=0$, so that $S=\lambda T$. It remains to show that the graphs
of $f$ and $g$ are homothetic, that is, that $f(\mathbf
x)=\lambda g(\frac{1}{\lambda}\mathbf x)+C$ for all $\mathbf x\in
S$, where $C\in \R$ is some constant. To calculate what this
constant must be, we have only to note that $$|A|=\int_S f(\mathbf
x)\;d\mathbf x=\int_S \lambda g(\frac{1}{\lambda}\mathbf
x)+C\;d\mathbf x=|\lambda B|+|S|C=\lambda^d|B|+|S|C,$$ and combine
this with \eqref{bebo}, which gives $C=\frac{|A|}{|S|}-\frac{\lambda
|B|}{|T|}$.

Let $\tilde{g}:S\rightarrow \R_{\geq 0}$ be the concave function
defined by $\tilde{g}(\mathbf x)=\lambda g(\frac{1}{\lambda}\mathbf
x)$. Since $f$ and $g$ are concave functions, they are Lipschitz
continuous in each variable, and thus absolutely continuous.
Furthermore,  $\tilde{g}'(\mathbf
x;\e_j)=g'(\frac{1}{\lambda}\mathbf x;\e_j)$ for all $j\in [1,d-1]$
and all $\mathbf x\in S$. Consequently, if $f'(\mathbf
x;\e_j)=g'(\frac{1}{\lambda}\mathbf x;\e_j)=\tilde{g}'(\mathbf
x;\e_j)$ for all $j\in [1,d-1]$ and a.e. $x\in \mathsf{int}(S)$,
then the Fundamental Theorem of Calculus would imply $f(\mathbf
x)=\lambda g(\frac{1}{\lambda}\mathbf x)+C$ for some constant $C$,
as desired. Therefore, if the statement of the lemma is false, then
there must be some differentiable point $\mathbf x_0\in S$,
contained in the interior of $S$ (as the boundary of $S$ has measure
zero), such that w.l.o.g. $f'(\mathbf x_0;\e_1)\geq
g'(\frac{1}{\lambda}\mathbf x_0;\e_1)+2\epsilon$ with $\epsilon>0$.
In view of \eqref{subdiff-cont}, we can find a small neighborhood
around $\mathbf x_0\in S$ in which $f'(\mathbf x;\e_1)\geq
f'(\mathbf x_0;\e_1)-\frac{\epsilon}{2}$ for $\mathbf x$ in this
neighborhood, as well as a small neighborhood around
$\frac{1}{\lambda}\mathbf x_0\in T$ in which $g'(\frac{1}{\lambda}\mathbf x;\e_1)\leq
g'(\frac{1}{\lambda}\mathbf x_0;\e_1)+\frac{\epsilon}{2}$ for
$\mathbf x$ in this neighborhood. Restricting to smaller
neighborhoods as need be, we can thus find a pair of homothetic
boxes $\mathbf x_0+[-\frac12 \delta,\frac12 \delta]^{d-1}\subseteq
S$ and $\frac{1}{\lambda}\mathbf
x_0+[-\frac{1}{2\lambda}\delta,\frac{1}{2\lambda}\delta]^{d-1}\subseteq
T$ such that \be\label{septre}f'(\mathbf x;\e_1)\geq g'(\mathbf
y;\e_1)+\epsilon\quad\mbox{  for all }\mathbf x\in \mathbf
x_0+[-\frac12\delta,\frac12\delta]^{d-1}\und \mathbf y\in
\frac{1}{\lambda}\mathbf
x_0+[-\frac{1}{2\lambda}\delta,\frac{1}{2\lambda}\delta]^{d-1},\ee
where $\epsilon>0$.

The remainder of the argument is now similar to a standard
inner/outer measure approximation to evaluate a Lebesgue integrable
function; see, e.g. \cite{Rudin}. For $k\in \{0,1,2,\ldots\}$,
partition  $\R^{d-1}$ into a grid using boxes of the form $\mathbf
z+\frac{1}{2^k}[-\frac12\delta,\frac12\delta]^{d-1}$ such that no
two boxes share an interior point and such that $\mathbf
x_0+[-\frac12\delta,\frac12\delta]^{d-1}$ is a union of some subset
of these  boxes. Let $\mathcal B_k$ be the collection of all theses
boxes wholly contained in $S$ and, for each box $\mathfrak b\in
\mathcal B_k$, let $A_{\mathfrak b}\subseteq A$ be the subset
$(\R\times \mathfrak b)\cap A$, which corresponds to the epigraph of
$f$ restricted to the domain $\mathfrak b\subseteq S$. Also, let
$\mathcal B'_k\subseteq \mathcal B_k$ be those boxes whose union is
$\mathbf x_0+[-\frac12\delta,\frac12\delta]^{d-1}$.

Let $\frac{1}{\lambda}\mathcal{B}_k=\{\frac{1}{\lambda}\mathfrak
b\mid \mathfrak b\in \mathcal B_k\}$. Thus $\frac{1}{\lambda}
\mathcal{B}_k$ consists of boxes of the form $\mathbf
z+\frac{1}{2^k}
[-\frac{1}{2\lambda}\delta,\frac{1}{2\lambda}\delta]^{d-1}$, wholly
contained in $T$, such that no two boxes share an interior point and
such that the union of boxes from $\frac{1}{\lambda}\mathcal B'_k$
is equal to $\frac{1}{\lambda}\mathbf x_0+
[-\frac{1}{2\lambda}\delta,\frac{1}{2\lambda}\delta]^{d-1}$. For
each box $\mathfrak b\in \mathcal B_k$, let $B_{\mathfrak
b}\subseteq B$ be the subset $(\R\times \frac{1}{\lambda}\mathfrak
b)\cap B$, which corresponds to the epigraph of $g$ restricted to
the domain $\frac{1}{\lambda}\mathfrak b\subseteq T$. Let
$$m_k=\frac{\delta}{2^{k}}\und n_k=\frac{\delta}{\lambda 2^{k}}$$
be, respectively, the length of each side of the boxes $\mathfrak
b\in \mathcal B_k$ and the length of each side of the boxes
$\frac{1}{\lambda}\mathfrak b\in  \frac{1}{\lambda}\mathcal B_k$.
Thus $$|\mathfrak b|=m_k^{d-1}\und |\frac{1}{\lambda}\mathfrak
b|=n_k^{d-1}\quad\mbox{ for } \mathfrak b\in \mathcal B_k.$$

It is now easily seen that $\bigcup_{\mathfrak b\in \mathcal
B_k}(A_{\mathfrak b}+B_{\mathfrak b})\subseteq A+B$ with the
intersection of any two distinct sumsets $A_{\mathfrak
b}+B_{\mathfrak b}$ being a measure zero subset; of course, we can
also use the more accurate estimate
$$(\bigcup_{\mathfrak b\in
\mathcal B'_k}A_\mathfrak b)+(\bigcup_{\mathfrak b\in \mathcal
B'_k}B_{\mathfrak b})\subseteq A+B$$
in place of $\bigcup_{\mathfrak
b\in \mathcal B'_k}(A_{\mathfrak b}+B_{\mathfrak b})$, and its
intersection with all other $A_{\mathfrak b}+B_{\mathfrak b}$, with
$\mathfrak b\in \mathcal B_k\setminus \mathcal B'_k$, will still be
a measure zero subset.  Thus
$$|A+B|\geq \Summ{\mathfrak b\in
\mathcal B_k\setminus \mathcal B'_k}|A_{\mathfrak b}+B_{\mathfrak
b}|+|(\bigcup_{\mathfrak b\in \mathcal B'_k}A_\mathfrak
b)+(\bigcup_{\mathfrak b\in \mathcal B'_k}B_{\mathfrak b}  ) |.$$

As a result, making use of \eqref{septre} and applying Lemma
\ref{lemma-basecase} to
$$(\bigcup_{\mathfrak b\in \mathcal
B'_k}A_\mathfrak b)+(\bigcup_{\mathfrak b\in \mathcal
B'_k}B_{\mathfrak b})$$

and then using Theorem \ref{Bonnesen-cor} for all other $\mathfrak
b\in \mathcal B_k\setminus B'_k$, we obtain \ber\nn &&|A+B|\geq
\Summ{\mathfrak b\in \mathcal B_k\setminus \mathcal B'_k}(m_k+n_k)^{d-1}(\frac{|A_{\mathfrak b}|}{m_k^{d-1}}+
\frac{|B_{\mathfrak b}|}{n_k^{d-1}})+\\\nn
&&(m_0+n_0)^{d-1}\left(\frac{1}{m_0^{d-1}}
\Summ{\mathfrak b\in \mathcal B'_k}|A_{\mathfrak b}|+\frac{1}{n_0^{d-1}}
\Summ{\mathfrak b\in \mathcal B'_k}|B_{\mathfrak b}|\right)+\frac{(m_0n_0)}{2}(m_0+n_0)^{d-2}\epsilon\\\nn
&=&(\frac{m_k+n_k}{m_k})^{d-1}\Summ{\mathfrak b\in \mathcal
B_k\setminus \mathcal B'_k}|A_{\mathfrak
b}|+(\frac{m_k+n_k}{n_k})^{d-1}\Summ{\mathfrak b\in \mathcal
B_k\setminus \mathcal B'_k}|B_{\mathfrak b}| +\\\nn
&&(\frac{m_0+n_0}{m_0})^{d-1}\Summ{\mathfrak b\in \mathcal
B'_k}|A_{\mathfrak b}|+ (\frac{m_0+n_0}{n_0})^{d-1} \Summ{\mathfrak
b\in \mathcal B'_k}|B_{\mathfrak
b}|+\frac{\delta^d(\lambda+1)^{d-2}}{2\lambda^{d-1}}\epsilon.\\\nn
\eer
In view of the definition of $m_k$ and $n_k$, we have
$\frac{m_k+n_k}{m_k}=1+\frac{1}{\lambda}$ and
$\frac{m_k+n_k}{n_k}=1+\lambda$ for all $k\in \{0,1,2\ldots\}$. Thus
the above calculation implies
\be|A+B|\geq  (1+\frac{1}{\lambda})^{d-1}\Summ{\mathfrak b\in \mathcal B_k}|A_{\mathfrak b}|+
(1+\lambda)^{d-1}\Summ{\mathfrak b\in \mathcal B_k}|B_{\mathfrak b}|+\frac{\delta^d(\lambda+1)^{d-2}}{2\lambda^{d-1}}\epsilon.\label{latest}
\ee

As $k\rightarrow \infty$, we see that $\bigcup_{\mathfrak b\in
\mathcal B_k} \mathfrak b$ approaches $S$. More specifically, since
$S$ is a convex body, the difference between $\lim_{k\rightarrow
\infty} \bigcup_{\mathfrak b\in \mathcal B_k} \mathfrak b$ and $S$
is a measure zero subset. Since $T=\frac{1}{\lambda} S$ is just a
dilation of $S$, we likewise see that the difference between
$\lim_{k\rightarrow \infty} \bigcup_{\mathfrak b\in \mathcal B_k}
\frac{1}{\lambda} \mathfrak b$ and $T$ is also a measure zero
subset. Consequently, $\Summ{\mathfrak b\in \mathcal
B_k}|A_{\mathfrak b}|\rightarrow |A|$ and $\Summ{\mathfrak b\in
\mathcal B_k}|B_{\mathfrak b}|\rightarrow |B|$ as $k\rightarrow
\infty$, whence \eqref{latest}, in view of $\epsilon>0$ and
\eqref{bebo}, shows that
\ber\nn|A+B|&\geq&(1+\frac{1}{\lambda})^{d-1}|A|+
(1+\lambda)^{d-1}|B|+\frac{\delta^d(\lambda+1)^{d-2}}{2\lambda^{d-1}}\epsilon\\\nn
&>& (1+\frac{|T|^{1/(d-1)}}{|S|^{1/(d-1)}})^{d-1}|A|+
(1+\frac{|S|^{1/(d-1)}}{|T|^{1/(d-1)}})^{d-1}|B|\\\nn&=&
(|S|^{1/(d-1)}+|T|^{1/(d-1)})^{d-1}\left(\frac{|A|}{|S|}+
\frac{|B|}{|T|}\right),\eer contrary to hypothesis.
\end{proof}

\bigskip

We conclude the section with the proof of Theorem \ref{Dual-thm} for general convex bodies.

\begin{proof}[Proof of Theorem \ref{Dual-thm}]  As noted at the
beginning of Section \ref{main-sec}, we may w.l.o.g. assume $\pi$ is
the vertical projection map with $\mathbf u=\e_0$. Since a pair of
homothetic convex bodies $A'$ and $B'$ attains equality in the
Brunn-Minkowski inequality, and thus also in
\eqref{dual-hypothesis},  Lemma
\ref{lemma-stretching-perseves-bonnbound} shows that the sets
described by Theorem \ref{Dual-thm} all satisfy equality
\eqref{dual-hypothesis}.

It remains to complete the other direction in Theorem
\ref{Dual-thm}, so assume $A,\,B\subseteq \R^d$ are convex bodies
satisfying \eqref{dual-hypothesis}.
Let $S=\pi(A)$ and $T=\pi(B)$, so that $M=|S|$ and $N=|T|$.  In view
of Lemma \ref{projection-is-homothetic-lemma}, it follows that $S$
and $T$ are homothetic convex bodies, say $S=\lambda T+\mathbf x_0$
with $\lambda>0$ and $\mathbf x_0\in \R^d$, and by translating
appropriately, we may w.l.o.g. assume that $\mathbf x_0=0$. Write
$A$ and $B$ using the notation of
\eqref{convex-graph-characterization}. Note that $v_A(\mathbf x)\geq
u_A(\mathbf x)$  and $v_B(\mathbf y)\geq u_B(\mathbf y)$ for all
$\mathbf x\in S$ and $\mathbf y\in T$. Let $$\alpha=\inf_{\mathbf
x\in S}\{v_A(\mathbf x)-u_A(\mathbf x)\}\geq
0\und\beta=\inf_{\mathbf x\in T}\{v_B(\mathbf x)-u_B(\mathbf
x)\}\geq 0.$$ Since $S$ and $T$ are both compact subsets, these
finite infima  are attained by some $\mathbf v\in S$ and $\mathbf
v'\in T$ (which, of course, may not be the only points for which the minimum is attained).

Let $A'\subseteq \R^d$ be the subset with $\pi(A')=\pi(A)=S$ defined by
 $$A'=\{(y,\mathbf x) \in \R\times \R^{d-1}\mid \mathbf x\in \pi(A),\; u_A(\mathbf x)\le y \le v_A(\mathbf x)-\alpha\}.$$
Then $u_{A'}=u_A$ and $v_{A'}=v_A-\alpha$ (in view of the definition
of $\alpha$) so that $A'$ is the maximal vertical `compression' of
$A$. In particular, $A$ is a vertical stretching of $A'$ of amount
$\alpha$. Likewise, let $B'\subseteq \R^d$ be the subset with
$\pi(B')=\pi(B)=T$ defined by  $$B'=\{(y,\mathbf x) \in \R\times
\R^{d-1}\mid \mathbf x\in \pi(B),\; u_B(\mathbf x)\le y \le
v_B(\mathbf x)-\beta\}.$$ The set $B$ is a vertical stretching of $B'$ of amount $\beta$. In view of Lemma \ref{lemma-stretching-perseves-bonnbound}, we find that the pair $A'$ and $B'$ also satisfies Bonnesen' equality \eqref{dual-hypothesis}.

Since $S=\lambda T$, it is easily observed that, if the  graphs
of $v_A$ and $v_B$ are both homothetic as well as the graphs
of $u_A$ and $u_B$, then we can take $\mathbf
v'=\frac{1}{\lambda}\mathbf v$ and, moreover, $A'$ and $B'$ will
then be homothetic convex bodies as  the graphs of $y=v_A(x)$ and $y=u_A(x)-\alpha$  intersect
over the point $\mathbf v$ while  the graphs of $y=v_{B}(x)$ and
$y=u_{B}(x)-\beta$ intersect over the corresponding point $\mathbf
v'=\frac{1}{\lambda}\mathbf v$, which would complete the proof in view
of the comments of the previous paragraph. We proceed to show this
is the case.

In view of  $S$ and $T$ being homothetic and Lemma
\ref{lemma-stretching-perseves-bonnbound}, we see that, to complete the proof, it suffices
to prove  the pair of graphs  $v_A$ and $v_B$ and the pair of graphs  $u_A$ and
$u_B$ are both homothetic for any pair of vertical stretchings
$\tilde{A}$ and $\tilde{B}$ of  $A$ and $B$. Thus, stretching $A$
and $B$ sufficiently, we may w.l.o.g. assume $$\inf_{\mathbf x\in S}
v_A(\mathbf x)>\sup_{\mathbf x\in S} u_A(\mathbf x)\quad\und\quad
\inf_{\mathbf y\in T} v_B(\mathbf y)>\sup_{\mathbf y\in T}
u_B(\mathbf y).$$
Consequently, translating $A$ and $B$ appropriately, we can assume
that $v_A(\mathbf x)>0$ and $u_A(\mathbf x)<0$ for all $\mathbf x\in
S$, and that $v_B(\mathbf y)>0$ and $u_B(\mathbf y)<0$ for all
$\mathbf y\in T$.

Let
\begin{align}A^+=A\cap (\R_{\geq 0}\times \R^{d-1})&\und
A^-=A\cap (\R_{\leq 0}\times \R^{d-1}),\nn\\\nn B^+=B\cap (\R_{\geq
0}\times \R^{d-1})&\und B^-=B\cap (\R_{\leq 0}\times
\R^{d-1}).\end{align}
Then
\begin{align}A^+=-\epi^*(-v_A)&\und
B^+=-\epi^*(-v_B),\nn\\\nn-A^-=-\epi^*(-(-u_A))&\und
-B^-=-\epi^*(-(-u_B)).\nn\end{align}
Since $A$ and $B$ are convex
bodies, we have $v_A$ and $v_B$ being concave functions and  $u_A$
and $u_B$ convex functions, in which case $-u_A$ and $-u_B$ are
concave functions.

Since $A^++B^+\subseteq \R_{\geq 0}\times \R^{d-1}$ and
$A^-+B^-\subseteq \R_{\leq 0}\times \R^{d-1}$, we see that
$(A^++B^+)\cap (A^-+B^-)$ is a measure zero subset. Thus, applying
Theorem \ref{Bonnesen-cor} to $A^++B^+$ and $A^-+B^-$, it follows
that \ber\nn|A+B|&\geq& |A^++B^+|+|A^-+B^-|
\\\nn
&\geq&(M^{1/(d-1)}+N^{1/(d-1)})^{d-1}(\frac{|A^+|}{M}+\frac{|B^+|}{N})+\\&&\nn
(M^{1/(d-1)}+N^{1/(d-1)})^{d-1}(\frac{|A^-|}{M}+\frac{|B^-|}{N})\\\nn&=&
(M^{1/(d-1)}+N^{1/(d-1)})^{d-1}(\frac{|A|}{M}+\frac{|B|}{N}).
\eer
By hypothesis, equality must hold in the above bound, which is only
possible if equality held in both the estimates for $A^++B^+$ and
for $A^-+B^-$. As result, applying Lemma \ref{lemma-function-case}
to $A^++B^+$ and to $(-A^-)+(-B^-)$ shows that the graphs of $v_A$
and $v_B$ are homothetic as well as the graphs of $u_A$ and
$u_B$, completing the proof.
\end{proof}

\section{Equality in the Hyperplane Slice Bonnesen Bound for $d=2$}

In this section, we prove Theorem \ref{thm-2-dim-nondual}, thus
determining the structure of extremal convex bodies satisfying
Theorem \ref{Bonnesen-thm} in dimension $2$.  To do so, by rotating
appropriately, we can w.l.o.g. assume $H=\e_1^\bot$ is the
$\e_0$-axis. We begin with the following lemma, which does not
necessarily hold for higher dimensions.

\begin{lemma}\label{2-dim-lineup-lemma}
Let  $A\subseteq \R^2$ be a convex body and let  $H=\R e_0$. Suppose
$A$ is translated so that $$|H\cap A|=\sup\{|(\mathbf x+H)\cap
A|\mid \mathbf x\in \R^2\}.$$ Then there exists some linear
transformation $\pi:\R^2\rightarrow \R^2$ with $\pi(\R^2)=H=\R\e_0$
and $\pi(A)=H\cap A$.
\end{lemma}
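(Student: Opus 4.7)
The plan is to construct $\pi$ as an oblique projection of the form $\pi(x_0, x_1) = (x_0 - c x_1, 0)$, where $c \in \R$ will be chosen. Any such $\pi$ is linear with image $H = \R \e_0$ and one-dimensional kernel spanned by $(c, 1)$, and it fixes every point of $H$; so only the equality $\pi(A) = H \cap A$ needs to be arranged. The correct value of $c$ will be read off from one-sided slopes of the upper and lower boundary of $A$ at the height $x_1 = 0$.

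First I would write $A$ in graph form: as $A$ is a planar convex body, there exist a closed interval $[y_{\min}, y_{\max}]$ containing $0$, a convex $u$ and a concave $v$ on this interval with
\[
A = \{(x_0, x_1) : x_1 \in [y_{\min}, y_{\max}],\ u(x_1) \leq x_0 \leq v(x_1)\}.
\]
By the translation hypothesis, the horizontal chord length $\ell(x_1) := v(x_1) - u(x_1)$ is concave on $[y_{\min}, y_{\max}]$ and attains its maximum at $x_1 = 0$. Reading off the one-sided conditions at $0$ yields
\[
v'_+(0) \leq u'_+(0) \und u'_-(0) \leq v'_-(0),
\]
with the obvious modification if $0$ is an endpoint of $[y_{\min}, y_{\max}]$.

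Combining these with $u'_-(0) \leq u'_+(0)$ (convexity of $u$) and $v'_+(0) \leq v'_-(0)$ (concavity of $v$), the interval $I := [\max\{u'_-(0), v'_+(0)\},\ \min\{u'_+(0), v'_-(0)\}]$ is non-empty, and I would pick any $c \in I$. Setting $h(x_1) := u(x_1) - c x_1$ and $k(x_1) := v(x_1) - c x_1$, this choice of $c$ places $0$ in the subdifferential of the convex function $h$ at $0$ and in the superdifferential of the concave function $k$ at $0$, so $h$ attains its global minimum $u(0)$ at $x_1 = 0$ and $k$ attains its global maximum $v(0)$ there. Consequently every image point $\pi(x_0, x_1) = (x_0 - c x_1, 0)$ of a point $(x_0, x_1) \in A$ satisfies $u(0) \leq x_0 - c x_1 \leq v(0)$, giving $\pi(A) \subseteq [u(0), v(0)] \times \{0\} = H \cap A$, while the reverse inclusion is immediate from $\pi$ fixing $H$.

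The only technicality is the endpoint case $0 \in \{y_{\min}, y_{\max}\}$, where only one one-sided slope of each of $u,\,v$ is defined at $0$ and the maximum-at-zero condition for $\ell$ reduces to that single one-sided inequality; in this case the corresponding constraints on $c$ simply drop out and the remaining ones are still consistent, so $I$ is still non-empty. This is a bookkeeping matter rather than a real obstacle. The genuine content of the argument is the passage from ``$0$ maximizes the chord length'' to the slope inequalities above, and it is precisely this step that has no direct analogue in higher dimensions, which is why the lemma is restricted to $d=2$.
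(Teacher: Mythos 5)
Your argument is correct and follows essentially the same path as the paper's: both proofs reduce to the observation that the chord-length function $\ell = v - u$ (the paper's $-\tilde f$) is concave with a maximum at $0$, which forces $[u'_-(0),u'_+(0)]$ and $[v'_+(0),v'_-(0)]$ to overlap, and then take the kernel direction of $\pi$ to have slope $c$ in that overlap. The paper phrases the nonemptiness of the overlap as a proof by contradiction via $0\in\partial\tilde f(0)$ rather than your direct construction, but the content is identical.
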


\begin{proof}
Let $H\cap A=[m,n]\times \{0\}$ with $m<n$. Write $A$ using the
notation of \eqref{convex-graph-characterization} (taking $\mathbf
u=\e_0$) and simplifying the notation for $u_A$ and $v_A$ by
defining $u:=u_A$ and $v:=v_A$. Observe that $u(0)=m$ and $v(0)=n$.
To prove the lemma, we need to find a slope $\lambda$ so that the
line passing through $(m,0)$ with slope $\lambda$ as well as the
line passing through $(n,0)$ with slope $\lambda$ are both
supporting/tangent lines to $A$, as then the linear transformation
$\pi:\R^2\rightarrow H$ having the line of slope $\lambda$ as its
kernel will satisfy the conclusions of the lemma. However, in terms
of subdifferentials, this is equivalent to showing $\partial
u(0)\cap -\partial (-v)(0)$ is nonempty.

Define $\tilde{f}:\pi(A)\rightarrow \R$ by
$\tilde{f}(x)=u(x)-v(x)$. Note $-\tilde{f}(x)= |((0,x)+H)\cap
A|$, so that, by hypothesis, $\min
\tilde{f}=\tilde{f}(0)=m-n<0$. As $A$ is convex, we know $-v$
and $u$ are both convex functions. Hence, since the sum of
convex functions remains convex, we see that $\tilde{f}$ is a
convex function. Since $\tilde{f}(x)$ attains its minimum at
$x=0$, we must have $0\in \partial \tilde{f}(0)$, which means
\be\label{toseep} \tilde{f}'_-(0)\leq 0\leq \tilde{f}'_+(0).\ee
From the definition of the one-sided derivative, it follows that
\be\label{pickup}\tilde{f}'_+={u}'_++{(-v)}'_+\und
\tilde{f}'_-={u}'_-+{(-v)}'_-.\ee

Suppose by contradiction that $\partial u(0)\cap -\partial
(-v)(0)=\emptyset$. Then, since $$\partial u(0)=[{u}'_-(0),
{u}'_+(0)]\und -\partial (-v)(0)=[-(-v)'_+(0), -(-v)'_-(0)],$$ we
see that either  $${u}'_+(0)<-(-v)'_+(0)\quad\mbox{ or }\quad \nn
-(-v)'_-(0)<{u}'_-(0).$$ Consequently, it follows in view of
\eqref{pickup} that either
$$\tilde{f}'_+(0)={u}'_+(0)+{(-v)}'_+(0)<0\quad\mbox{ or }\quad
\tilde{f}'_-(0)={u}'_-(0)+ {(-v)}'_-(0)>0,$$  contradicting
\eqref{toseep}.
\end{proof}

\medskip

We now proceed with the simple derivation of Theorem \ref{thm-2-dim-nondual} from Theorem \ref{Dual-thm}.

\medskip

\begin{proof}[Proof of Theorem \ref{thm-2-dim-nondual}]
If $A$ and $B$ are a pair of sets satisfying the description given
by Theorem \ref{thm-2-dim-nondual}, then the equality
\eqref{dim2-hyp} is the same as the equality
\eqref{dual-hypothesis}, which holds for $A$ and $B$ in view of
Theorem \ref{Dual-thm}.

It remains to complete the other direction of Theorem
\ref{thm-2-dim-nondual}, so assume $A,\,B\subseteq \R^2$ are convex
bodies satisfying \eqref{dim2-hyp}. By rotating appropriately, we
can assume w.l.o.g. that $H=\e_1^\bot=\R\e_0$ is the $\e_0$-axis. We
may also w.l.o.g. assume \be\frac{|B|}{N^2}\leq
\frac{|A|}{M^2}.\label{drucker}\ee In view of Lemma
\ref{2-dim-lineup-lemma}, let $\pi:\R^2\rightarrow \R^2$ be a linear
transformation such that $\pi(\R^2)=H=\R\e_0$ and  $\pi(A)=H\cap A$.
Let $N'=|\pi(B)|$. Note, since $H\cap B\subseteq \pi(B)$ and $N=|
H\cap B|$, that $$N\leq N'.$$ Applying Theorem \ref{Bonnesen-cor}
using $\pi$, we conclude that \be |A+B|\geq
(M+N')(\frac{|A|}{M}+\frac{|B|}{N'})\label{ladyjay}.\ee

Let $h:\R_{>0}^2\rightarrow \R$ be defined by
$h(x,y)=(x+y)(\frac{|A|}{x}+\frac{|B|}{y})$. Then
$$h(M,N)=(M+N)(\frac{|A|}{M}+\frac{|B|}{N})=|A+B|$$ by hypothesis.
Letting $h_x(y)=h(x,y)$, we find that
$h'_x(y)=\frac{-|B|x}{y^2}+\frac{|A|}{x}$, which is non-negative
when \be\label{tial}\frac{|B|}{y^2}\leq \frac{|A|}{x^2},\ee and
positive when $\frac{|B|}{y^2}<\frac{|A|}{x^2}$.

If $(x,y)\in \R^2$  satisfies \eqref{tial} with $x,\,y>0$, then
$(x,y')$ will satisfy \eqref{tial} strictly for all $y'>y$.
Consequently, it follows from the above derivative analysis that
$h(x,y')>h(x,y)$ for such $(x,y)$. In particular, in view of
\eqref{drucker} and $N'\geq N$, we see that $h(M,N')\geq h(M,N)$
with equality possible only if $N'=N$. As a result, since
$|A+B|=h(M,N)$ holds with equality by hypothesis, we conclude from
\eqref{ladyjay} that $N=N'$. Therefore, since $H\cap B\subseteq
\pi(B)$ with $|H\cap B|=N=N'=|\pi(B)|$, we see that $\pi(B)\setminus
(H\cap B)$ is a measure zero subset. Thus, since $B\subseteq \R^2$
is a convex body, so that $\pi(B)$ and $H\cap B$ are both closed
intervals in $\R$, it follows that $\pi(B)=H\cap B$. Hence, since we
also have $\pi(A)=H\cap A$ by the choice of $\pi$, we see that
applying Theorem \ref{Dual-thm} with $\pi$ completes the proof.
\end{proof}

\end{document}